   \theoremstyle{plain}
   \newtheorem{thm}{Theorem}[section]
   \newtheorem{lem}[thm]{Lemma}
   \theoremstyle{definition}
   \theoremstyle{remark}
   \newtheorem{rmk}{Remark}[section]
  \newcommand{\ve}{\varepsilon}
  \newcommand{\bfa}{\mathbf{a}}
  \newcommand{\bfb}{\mathbf{b}}
  \newcommand{\bff}{\mathbf{f}}
  \newcommand{\bfg}{\mathbf{g}}
  \newcommand{\bfp}{\mathbf{p}}
  \newcommand{\bfq}{\mathbf{q}}
  \newcommand{\bfQ}{\mathbf{Q}}
  \newcommand{\bfu}{\mathbf{u}}
  \newcommand{\bfv}{\mathbf{v}}
  \newcommand{\bfw}{\mathbf{w}}
  \newcommand{\bfx}{\mathbf{x}}
  \newcommand{\bfnu}{\boldsymbol{\nu}}
\begin{document}

\title{Reconstruction of penetrable inclusions in elastic waves by boundary
measurements}
\author{Rulin Kuan\footnote{D97221002@ntu.edu.tw}}
\affil{Department of Mathematics, National Taiwan University, Taipei 106, Taiwan}
\date{}
\maketitle

\begin{abstract}
We use Ikehata's enclosure method to reconstruct penetrable unknown inclusions in a plane elastic body in time-harmonic waves. Complex geometrical optics solutions with complex polynomial phases are adopted as the probing utility.
In a situation similar to ours, due to the presence of a zeroth order term in the equation, some technical assumptions need to be assumed in early researches. 
In a recent work of Sini and Yoshida, they succeeded in abandoning these assumptions by using a different idea to obtain a crucial estimate. In particular the boundaries of the inclusions need only to be Lipschitz. In this work we apply the same idea to our model. It's interesting that, with more careful treatment, we find the boundaries of the inclusions can in fact be assumed to be only continuous.\\

Keywords: enclosure method, reconstruction, complex geometrical optics solutions, time-harmonic elastic waves.
\end{abstract}
\setcounter{equation}{0}
\section{Introduction}
In this paper we consider the inverse problem of reconstructing penetrable unknown inclusions in a plane elastic body by boundary measurements. In \cite{UW2007} and \cite{UWW2009}, the same problem is considered in the context of elastostatics. In the present work we shall consider the situation when time-harmonic waves are applied. The mathematical model is described in the following.

\subsection{Mathematical model}
Let $\Omega\subset\mathbb{R}^2$ be a bounded domain (open connected set) 
occupied by our object, which consists of an elastic body as background and some unknown inclusions therein. For simplicity we assume $\Omega$ has $\mathcal{C}^{\infty}$ boundary. The background elastic body will be assumed to be homogeneous and isotropic with Lam\'{e} constants denoted by $\lambda_0$ and $\mu_0$. Denote the region of unknown inclusions by $D$. $D$ is an open subset of $\Omega$ with $\bar{D}\subset\Omega$. 
The inclusions are also assumed to be isotropic but may be inhomogeneous. Denote the differences between the Lam\'{e} coefficients of the inclusions and the background by $\lambda_D$ and $\mu_D$, which are assumed to be in $L^{\infty}(\Omega)$, with $\lambda_D=\mu_D=0$ on $\Omega\setminus\bar{D}$. So
the Lam\'{e} coefficients $\lambda$ and $\mu$ of the whole object on $\Omega$ are given by
\[
\lambda=\lambda_0+\lambda_D
\quad\textup{and}\quad
\mu=\mu_0+\mu_D.
\]

For simplicity we also assume our object has unit density. Now, consider we send a time-harmonic elastic wave with time dependence $e^{ikt}$ into $\Omega$. By singling out the space part we have the displacement field $\bfu$, which is a two-component vector-valued function, satisfying
\begin{align}
\nabla\cdot(\sigma(\bfu))+k^2\bfu=0\quad\mbox{in }\Omega.
\label{tildeE}\end{align} 
Here, for any displacement field $\bfv$ (which we will assumed to be a column vector), $\sigma(\bfv)$ is the corresponding stress tensor, which is represented by a $2 \times 2$ matrix:
\[
\sigma(\bfv)=\lambda(\nabla\cdot\bfv)I_2 + 2\mu\epsilon(\bfv),
\]
where $I_2$ is the $2 \times 2$ identity matrix and $\epsilon(\bfv)=\frac{1}{2}(\nabla\bfv+(\nabla\bfv)^T)$ denotes the infinitesimal strain tensor. Note that for $\bfv=(v_1,v_2)^T$, $\nabla\bfv$ denotes the $2\times 2$ matrix whose $j$-th row is $\nabla v_j$ for $j=1,2$. And for a $2\times 2$ matrix function $A$, $\nabla \cdot A$ denotes the column vector whose $j$-th component is the divergence of the $j$-th row of $A$ for $j=1,2$. 

For $D=\emptyset$, that is for the case with no inclusion, the corresponding displacement field will usually be denoted by $\bfu_0$, which satisfies
\begin{align}
\nabla\cdot(\sigma_0(\bfu_0))+k^2\bfu_0=0\quad\mbox{in }\Omega,
\label{E0}\end{align}
where 
\[
\sigma_0(\bfv)=\lambda_0(\nabla\cdot\bfv)I_2+2\mu_0\epsilon(\bfv)
\]
for any displacement field $\bfv$. Accordingly, we will use $\sigma_D(\bfv)$ to denote $\sigma(\bfv)-\sigma_0(\bfv)$, i.e.
\[
\sigma_D(\bfv) = \lambda_D(\nabla\cdot\bfv)I_2 + 2\mu_D\ve(\bfv).
\]

We assume $\lambda_0,\mu_0$ and $\lambda,\mu$ satisfy the conditions
\begin{equation}\label{elliptic}
\begin{aligned}
&\lambda_0+2\mu_0>0,\mbox{ }\mu_0>0,\mbox{ and}\\
&\lambda+2\mu>0,\mbox{ }\mu>0\quad\mbox{on }\Omega,
\end{aligned}
\end{equation}
which ensure respectively that $-\nabla\cdot\sigma_0$ and $-\nabla\cdot\sigma$ are strongly elliptic operators. In particular the two operators both have at most countably many Dirichlet eigenvalues. As a consequence, we can readily choose (and will choose) $k\in\mathbb{R}$ so that $k^2$ is neither an eigenvalue of $-\nabla\cdot\sigma_0$ nor an eigenvalue of $-\nabla\cdot\sigma$. In this situation, the Dirichlet boundary value problems corresponding to (\ref{tildeE}) and (\ref{E0}) have unique solutions (see e.g. Ch.4 of \cite{M2000book}). Thus we can define the Dirichlet-to-Neumann maps $\Lambda_D$ and $\Lambda_{\emptyset}$, both from $H^{\frac{1}{2}}(\partial\Omega)^2$ to $H^{-\frac{1}{2}}(\partial\Omega)^2$, by
\begin{equation}\label{dnmap}
\Lambda_D \bff=\sigma(\bfu)\bfnu|_{\partial\Omega}
\quad\textup{and}\quad
\Lambda_{\emptyset}\bff=\sigma_0(\bfu_0)
\bfnu|_{\partial\Omega},
\end{equation}
where $\bfnu$ is the unit outer normal on $\partial\Omega$ and $\bfu$ and $\bfu_0$ solve respectively (\ref{tildeE}) and (\ref{E0}) with Dirichlet boundary data $\bff$. The goal is to determine the unknown inclusions from the knowledge of $\Lambda_D$ and $\Lambda_{\emptyset}$.

\subsection{The method and improvement}
We will utilize the enclosure-type method to reconstruct the unknown inclusions. Such kind of methods are initiated by Ikehata and have been successfully applied to a various type of reconstruction problems, see for example \cite{I1998, I1999, I2005-2, II2008}. In this method, complex geometrical optics (CGO) solutions usually play the important role of the probing utility. In his early works, Ikehata use the Calder\'{o}n type harmonic function \cite{C1980} $e^{x\cdot(\omega+i\omega^{\perp})}$ with $\omega\in \mathbb{S}^{n-1}$ \cite{I1999,I1999-2}. It looks like one uses lines (planes) to enclose the obstacle (and hence the name). As a consequence a connected inclusion is required to be convex for a complete identification, and in general only its convex hull can be determined. One can refer to the survey paper \cite{I2001} for detailed explanation and early development of this theory. In \cite{SW2006}, \cite{NY2007} and \cite{IINSU2007}, the authors utilize the complex spherical wave solutions and some concave parts of unknown inclusions can be determined. In \cite{UW2008}, the authors proposed a framework of constructing CGO solutions with general phases for some elliptic systems in two-dimension. In the same paper they then applied CGO solutions with complex polynomial phases to conductivity equations, and inclusions with more general shapes can be determined. This type of CGO solutions were later applied to other equations, for example \cite{UWW2009} for static elastic systems and \cite{NUW2011} for Helmholtz equations. In this work we will also apply CGO solutions with complex polynomial phases to our problem, of which the governing equations are the Helmholtz type elastic systems (\ref{tildeE}) and (\ref{E0}). 

A crucial point in our problem, as in \cite{I1999,NY2007,NUW2011}, is the presence of the zeroth order term. Due to this, some technical assumptions are needed in early researches. In particular $\partial D$ is assumed to be $\mathcal{C}^2$. However in the recent work \cite{SYpreprint} of Sini and Yoshida, by using a different idea to obtain a crucial estimate, they succeeded in abandoning these technical assumptions, and in particular $\partial D$ can be only Lipschitz. In this paper, we apply the same idea to our model. With more careful treatment, we find the boundaries of the inclusions can in fact be assumed to be only continuous. More detailed discussions are given in the remark after our main theorem, Theorem \ref{thm4.1}.

In the following we give a sketch of this paper as well as a rough idea of the whole process of the enclosure method. In section 2, we introduce a functional $E$ on $H^{\frac{1}{2}}(\partial\Omega)^2$, which will be called the indicator functional in this paper. And then we give an upper bound and a lower bound of $E$, which play central roles in the proof of the main theorem. In fact, we will construct a family $\bff_{d,h} \in H^{\frac{1}{2}}(\partial\Omega)^2$ as input data into $E$, and the limiting behavior of the output data, for various $d$, will indicate the location of $\partial D$. The construction of $\bff_{d,h}$ is based on the construction of CGO solutions for (\ref{E0}), which is given in section 3. By using the Helmholtz decomposition and the Vekua transform, this construction is much the same as in \cite{NUW2011}. The main theorem concerning the limiting behavior of $E$ on $\bff_{d,h}$, as well as discussions on the implication, the idea of proof and our improvement, are given in section 4.
\section{The indicator functional}
\setcounter{equation}{0}
In this section we introduce the functional $E$ on $H^{\frac{1}{2}}(\partial\Omega)$ defined by
\[
E(\bff)=\int_{\partial\Omega}
\left[(\Lambda_D-\Lambda_{\emptyset})\bff\right]\cdot\bar{\bff}ds,
\]
where the Dirichlet-to-Neumann maps $\Lambda_D$ and $\Lambda_{\emptyset}$ are defined in (\ref{dnmap}). $E$ will be called the indicator functional (according to Ikehata's indicator function \cite{I1999-2}), which plays a central role in the enclosure method. Intuitively, it measures, for a fixed Dirichlet boundary data, the difference between the tractions corresponding to the situations with and without $D$. 

Now let $\bfu$ and $\bfu_0 \in H^1(\Omega)^2$ satisfy (\ref{tildeE}) and (\ref{E0}) respectively with the same boundary condition 
$\bff\in H^{\frac{1}{2}}(\partial\Omega)^2$, and let $\bfw = \bfu - \bfu_0$. The goal in this section is to prove Lemma \ref{lem1.7}, which gives a lower bound and an upper bound of $E(\bff)$ in terms of $\bfu_0$ and $\bfw$. To this end, we first give two identities. Note that we use $|A|$ to denote $\left(\sum_{i,j}a_{ij}^2\right)^{1/2}$ for a matrix $A=\left(a_{ij}\right)$. 

\begin{lem}\label{lem1.6}
We have the following two identities:
\begin{align}
&\begin{aligned}\label{E.a}
&E(\bff) = \int_D\bigg{\{}
(\lambda_D+\mu_D)\left|\nabla\cdot\bfu_0\right|^2
+ 2\mu_D\left|\epsilon(\bfu_0)-\frac{1}{2}(\nabla\cdot\bfu_0)I_2\right|^2
\bigg{\}}dx \\
&\qquad\quad - \int_{\Omega}\bigg{\{}
(\lambda+\mu)\left|\nabla\cdot\bfw\right|^2
+ 2\mu\left|\epsilon(\bfw)-\frac{1}{2}(\nabla\cdot\bfw)I_2\right|^2
\bigg{\}}dx \\
&\qquad\quad + \int_{\Omega}k^2\left|\bfw\right|^2dx; 
\end{aligned} \\
&\begin{aligned}\label{E.b}
&E(\bff) = \int_D\bigg{\{}
(\lambda_D+\mu_D)\left|\nabla\cdot\bfu\right|^2 
+ 2\mu_D\left|\epsilon(\bfu)-\frac{1}{2}(\nabla\cdot\bfu)I_2\right|^2
\bigg{\}}dx \\
&\qquad\quad + \int_{\Omega}\bigg{\{}
(\lambda_0+\mu_0)\left|\nabla\cdot\bfw\right|^2
+ 2\mu_0\left|\epsilon(\bfw)-\frac{1}{2}(\nabla\cdot\bfw)I_2\right|^2
\bigg{\}}dx \\
&\qquad\quad - \int_{\Omega}k^2\left|\bfw\right|^2dx. 
\end{aligned}
\end{align}
\end{lem}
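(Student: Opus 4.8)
The plan is to derive both identities from two ingredients: a pointwise algebraic decomposition of the elastic energy density, and Green's (Betti's) formula combined with the weak formulations of the two boundary value problems. Throughout, for $2\times 2$ matrices $A=(a_{ij})$, $B=(b_{ij})$ write $A:B=\sum_{i,j}a_{ij}b_{ij}$, so that $|A|^2=A:\bar A$, and introduce the symmetric (bilinear, not sesquilinear) forms $B(\bfv,\bfz)=\int_\Omega\sigma(\bfv):\epsilon(\bfz)\,dx$, $B_0(\bfv,\bfz)=\int_\Omega\sigma_0(\bfv):\epsilon(\bfz)\,dx$ and $B_D=B-B_0$, the last of which reduces to an integral over $D$ because $\lambda_D=\mu_D=0$ off $\bar D$. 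First I record the pointwise identity: since $\epsilon(\bfv)-\tfrac12(\nabla\cdot\bfv)I_2$ is trace-free and $I_2:I_2=2$ in two dimensions, one finds $\epsilon(\bfv):\epsilon(\bar\bfv)=|\epsilon(\bfv)-\tfrac12(\nabla\cdot\bfv)I_2|^2+\tfrac12|\nabla\cdot\bfv|^2$, hence for any real coefficients $(\lambda',\mu')$ the density $\sigma'(\bfv):\epsilon(\bar\bfv)$ equals $(\lambda'+\mu')|\nabla\cdot\bfv|^2+2\mu'|\epsilon(\bfv)-\tfrac12(\nabla\cdot\bfv)I_2|^2$. This is exactly the integrand shape appearing in (\ref{E.a}) and (\ref{E.b}), so it suffices to establish the two compact forms $E(\bff)=B_D(\bfu_0,\bar\bfu_0)-B(\bfw,\bar\bfw)+k^2\int_\Omega|\bfw|^2$ and $E(\bff)=B_D(\bfu,\bar\bfu)+B_0(\bfw,\bar\bfw)-k^2\int_\Omega|\bfw|^2$.

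For (\ref{E.a}) I start from $E(\bff)=\int_{\partial\Omega}(\sigma(\bfu)\bfnu)\cdot\bar\bff\,ds-\int_{\partial\Omega}(\sigma_0(\bfu_0)\bfnu)\cdot\bar\bff\,ds$ and, using $\bar\bff=\bar\bfu_0|_{\partial\Omega}$, replace $\bar\bff$ by $\bar\bfu_0$ in both boundary integrals. Applying Green's formula to each term together with $\nabla\cdot\sigma(\bfu)=-k^2\bfu$ and $\nabla\cdot\sigma_0(\bfu_0)=-k^2\bfu_0$ gives $E(\bff)=B(\bfu,\bar\bfu_0)-B_0(\bfu_0,\bar\bfu_0)-k^2\int_\Omega\bfw\cdot\bar\bfu_0$; expanding $\bfu=\bfu_0+\bfw$ and $B=B_0+B_D$ collapses this to $E(\bff)=B_D(\bfu_0,\bar\bfu_0)+B(\bfw,\bar\bfu_0)-k^2\int_\Omega\bfw\cdot\bar\bfu_0$. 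The crucial step is to remove the mixed term $B(\bfw,\bar\bfu_0)$: because the Lamé coefficients and $k$ are real, $\bar\bfu$ solves the same equation as $\bfu$, and testing it against $\bfw\in H^1_0(\Omega)^2$ (whose trace vanishes since $\bfu$ and $\bfu_0$ share the data $\bff$) yields $B(\bfw,\bar\bfu)=k^2\int_\Omega\bfw\cdot\bar\bfu$. Writing $\bar\bfu=\bar\bfu_0+\bar\bfw$ and using symmetry of $B$ then gives $B(\bfw,\bar\bfu_0)-k^2\int_\Omega\bfw\cdot\bar\bfu_0=-B(\bfw,\bar\bfw)+k^2\int_\Omega|\bfw|^2$, which substituted above produces the compact form of (\ref{E.a}).

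Identity (\ref{E.b}) follows from the symmetric variant of this argument: I pair the boundary integrals with $\bar\bfu$ instead of $\bar\bfu_0$, so that Green's formula and $\bfu_0=\bfu-\bfw$, $B_0=B-B_D$ yield $E(\bff)=B_D(\bfu,\bar\bfu)+B_0(\bfw,\bar\bfu)-k^2\int_\Omega\bfw\cdot\bar\bfu$. This time I test the equation solved by $\bar\bfu_0$ against $\bfw\in H^1_0(\Omega)^2$ to get $B_0(\bfw,\bar\bfu_0)=k^2\int_\Omega\bfw\cdot\bar\bfu_0$, and, substituting $\bar\bfu=\bar\bfu_0+\bar\bfw$, the mixed term becomes $B_0(\bfw,\bar\bfu)-k^2\int_\Omega\bfw\cdot\bar\bfu=B_0(\bfw,\bar\bfw)-k^2\int_\Omega|\bfw|^2$, giving the compact form of (\ref{E.b}). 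Applying the pointwise energy decomposition to $B_D(\bfu_0,\bar\bfu_0)$, $B(\bfw,\bar\bfw)$ in (\ref{E.a}) and to $B_D(\bfu,\bar\bfu)$, $B_0(\bfw,\bar\bfw)$ in (\ref{E.b}), and recalling that $B_D$ reduces to an integral over $D$, reproduces the stated forms verbatim.

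The delicate points are bookkeeping rather than conceptual. First, all the Green's formula steps must be read as the weak formulations of the equations: since $\lambda,\mu\in L^\infty$ and $\bfu,\bfu_0\in H^1(\Omega)^2$, the stresses lie in $L^2$ and $\nabla\cdot\sigma(\bfu)=-k^2\bfu\in L^2(\Omega)^2$, so the normal tractions are well defined in $H^{-\frac12}(\partial\Omega)^2$ and the integration-by-parts identities hold as $H^{-\frac12}$–$H^{\frac12}$ dualities; notably no regularity of $\partial D$ enters here, as every volume integral is over $\Omega$. Second, one must track carefully which field is conjugated: the forms $B,B_0,B_D$ are symmetric but bilinear, so the cancellations rely on pairing $\bfw$ with the conjugated solutions $\bar\bfu$, $\bar\bfu_0$, which genuinely solve the governing equations precisely because the Lamé coefficients and $k$ are real. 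I expect this conjugation bookkeeping, together with checking that $E(\bff)$ is indeed real as the final expressions demand, to be the only place where care is essential; the remaining computations are routine.
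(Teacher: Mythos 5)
Your proposal is correct and follows essentially the same route as the paper: integration by parts against the traces $\bar\bfu_0$ and $\bar\bfu$, the symmetry of the elastic pairing (the paper's identity (\ref{exchange})), the vanishing trace of $\bfw$, the realness of the coefficients (which you use by noting $\bar\bfu$, $\bar\bfu_0$ solve the same equations, where the paper instead conjugates an identity), and the isotropic decomposition (\ref{isofor}) to pass to the stated integrands. The only difference is organizational — you package the computation in bilinear forms $B$, $B_0$, $B_D$ and eliminate the cross term directly, whereas the paper first isolates $E(\bff)=\int_\Omega tr(\sigma_D(\bfu_0)\nabla\bar\bfu)\,dx$ and separately evaluates $k^2\int_\Omega|\bfw|^2\,dx$ — but the underlying steps are identical.
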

\begin{lem}\label{lem1.7}
Assume that the Lam\'{e} coefficients $\lambda_0$, $\mu_0$ and $\lambda$, $\mu$ satisfy the strong convexity condition, that is
\[
\lambda_0+\mu_0,\mbox{ }\mu_0>0 \quad\mbox{and}\quad \lambda+\mu,\mbox{ }\mu>0,
\]
then we have the following upper bound and lower bound of $E(\bff)$:
\begin{align*}
&E(\bff)\leq\int_D (\lambda_D+\mu_D)\left|\nabla\cdot\bfu_0\right|^2 dx \\
&\qquad\quad+2\int_D\mu_D
\left|\epsilon(\bfu_0)-\frac{1}{2}(\nabla\cdot\bfu_0)I_2\right|^2dx
+\int_{\Omega}k^2\left|\bfw\right|^2dx; \\
&E(\bff)\geq\int_D
\frac{(\lambda_D+\mu_D)(\lambda_0+\mu_0)}{\lambda+\mu}
\left|\nabla\cdot\bfu_0\right|^2 dx \\
&\qquad\quad+2\int_D\frac{\mu_D\mu_0}{\mu}
\left|\epsilon(\bfu_0)-\frac{1}{2}(\nabla\cdot\bfu_0)I_2\right|^2dx
-\int_{\Omega}k^2\left|\bfw\right|^2dx.
\end{align*}
\end{lem}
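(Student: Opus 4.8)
The plan is to read both bounds directly off the two identities in Lemma~\ref{lem1.6}, using the strong convexity hypotheses to control the signs of the auxiliary integrals and a pointwise minimization to produce the optimal constants in the lower bound.

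For the upper bound I would start from identity (\ref{E.a}). The middle integral over $\Omega$ has integrand $(\lambda+\mu)|\nabla\cdot\bfw|^2 + 2\mu|\epsilon(\bfw)-\frac{1}{2}(\nabla\cdot\bfw)I_2|^2$, which is nonnegative at every point because $\lambda+\mu>0$ and $\mu>0$. Since this term enters (\ref{E.a}) with a minus sign, discarding it only increases the right-hand side, and what remains is exactly the claimed upper bound; this step is essentially immediate.

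For the lower bound I would work from identity (\ref{E.b}). First split $\int_{\Omega}=\int_D+\int_{\Omega\setminus D}$: the integrand $(\lambda_0+\mu_0)|\nabla\cdot\bfw|^2 + 2\mu_0|\epsilon(\bfw)-\frac{1}{2}(\nabla\cdot\bfw)I_2|^2$ is nonnegative by strong convexity of the background constants, so the $\Omega\setminus D$ piece may be dropped, leaving all surviving integrals over $D$ together with $-\int_{\Omega}k^2|\bfw|^2dx$. The key step is then a pointwise optimization on $D$. Writing $\nabla\cdot\bfu=\nabla\cdot\bfu_0+\nabla\cdot\bfw$ and regarding $\nabla\cdot\bfw$ as a free complex scalar $t$ with $\nabla\cdot\bfu_0$ a fixed value $a$, I would minimize $(\lambda_D+\mu_D)|a+t|^2+(\lambda_0+\mu_0)|t|^2$; completing the square gives minimal value $\frac{(\lambda_D+\mu_D)(\lambda_0+\mu_0)}{\lambda+\mu}|a|^2$, using $\lambda+\mu=(\lambda_0+\mu_0)+(\lambda_D+\mu_D)$. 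The same computation, applied entrywise to the traceless symmetric matrices $\epsilon(\bfu_0)-\frac{1}{2}(\nabla\cdot\bfu_0)I_2$ and $\epsilon(\bfw)-\frac{1}{2}(\nabla\cdot\bfw)I_2$, produces the constant $\frac{\mu_D\mu_0}{\mu}$ for the deviatoric part. Because the actual value of $\nabla\cdot\bfw$ (respectively the deviatoric strain of $\bfw$) always attains at least this minimum, the inequalities hold pointwise for the true $\bfw$; integrating over $D$ and retaining $-\int_{\Omega}k^2|\bfw|^2dx$ yields precisely the stated lower bound.

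The one point that needs genuine care—and the step I would treat as the main obstacle—is justifying this minimization. The quadratic $t\mapsto(\lambda_D+\mu_D)|a+t|^2+(\lambda_0+\mu_0)|t|^2$ has leading coefficient $\lambda+\mu$, so its convexity, and hence the existence of a true global minimum even when $\lambda_D+\mu_D$ is negative, relies exactly on $\lambda+\mu>0$; meanwhile nonnegativity of the discarded background integrand uses $\lambda_0+\mu_0,\mu_0>0$. Thus both halves of the strong convexity assumption are consumed, and one must note that $\lambda_D,\mu_D$ are only $L^{\infty}$, so the minimization is performed pointwise in $x$ and then integrated. A minor bookkeeping issue is that the fields are complex-valued, so the completion of the square should be carried out over $\mathbb{C}$ (and over an orthonormal basis of traceless symmetric matrices for the deviatoric term), but this does not affect the optimal constants.
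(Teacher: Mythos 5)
Your proposal is correct and follows essentially the same route as the paper: the upper bound by discarding the nonnegative middle integral in (\ref{E.a}), and the lower bound by restricting the middle integral of (\ref{E.b}) to $D$ and then completing the square pointwise in $\nabla\cdot\bfw$ and in the deviatoric strain — the paper records exactly this minimization as the two explicit identities (i) and (ii), yielding the same constants $\frac{(\lambda_D+\mu_D)(\lambda_0+\mu_0)}{\lambda+\mu}$ and $\frac{\mu_D\mu_0}{\mu}$. Your remark about carrying out the computation over $\mathbb{C}$ is a fair (minor) point of care that the paper glosses over.
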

\begin{proof}
The upper bound of $E(\bff)$ follows immediately from (\ref{E.a}) (by omitting the second integral).
On the other hand, from (\ref{E.b}) we have
\begin{align}\label{I.1}
\begin{aligned}
E(\bff)&\geq
\int_D\bigg{\{}(\lambda_D+\mu_D)|\nabla\cdot\bfu|^2
+2\mu_D\left|\epsilon(\bfu)-\frac{1}{2}(\nabla\cdot\bfu)I_2\right|^2
\bigg{\}}dx\\
&\quad+\int_D\bigg{\{}(\lambda_0+\mu_0)|\nabla\cdot\bfw|^2
+2\mu_0\left|\epsilon(\bfw)-\frac{1}{2}(\nabla\cdot\bfw)I_2\right|^2
\bigg{\}}dx\\
&\quad-\int_{\Omega}k^2|\bfw|^2dx.
\end{aligned}
\end{align}
And the lower bound follows from the following two identities, of which the verifications are straightforward (by using $\bfw=\bfu-\bfu_0$).
\begin{itemize}
\item [(i)]
\[
\begin{aligned}
&(\lambda_D+\mu_D)\left|\nabla\cdot\bfu\right|^2
+ (\lambda_0+\mu_0)\left|\nabla\cdot\bfw\right|^2 \\
&\quad= \left( \sqrt{\lambda+\mu}\nabla\cdot\bfu
-\frac{\lambda_0+\mu_0}{\sqrt{\lambda+\mu}}
\nabla\cdot\bfu_0 \right)^2 + \frac{(\lambda_D+\mu_D)(\lambda_0+\mu_0)}{\lambda+\mu} \left|\nabla\cdot\bfu_0\right|^2.
\end{aligned}
\]
\item [(ii)]
\[
\begin{aligned}
&2\mu_D\left|\epsilon(\bfu)-\frac{1}{2}(\nabla\cdot\bfu)I_2\right|^2
+ 2\mu_0\left|\epsilon(\bfw)-\frac{1}{2}(\nabla\cdot\bfw)I_2\right|^2 \\
&\quad=\sum_{i,j}\left|\sqrt{2\mu}b_{ij} 
-\frac{2\mu_0}{\sqrt{2\mu}}b^0_{ij}\right|^2
+\frac{2\mu_D\mu_0}{\mu}\left|\epsilon(\bfu_0)
-\frac{1}{2}(\nabla\cdot\bfu_0)I_2\right|^2,
\end{aligned}
\]
where 
\[
(b_{ij}):=\epsilon(\bfu)-\frac{1}{2}(\nabla\cdot\bfu)I_2
\quad\mbox{and}\quad (b^0_{ij}):=\epsilon(\bfu_0)-\frac{1}{2}(\nabla\cdot\bfu_0)I_2.
\]
\end{itemize}
\end{proof}

For completeness we give the proof of Lemma \ref{lem1.6} in the following. 
Before doing so, note that we have the following basic formulae:
\begin{align}
\nabla\cdot(\sigma(\bfu)\bfv) &= (\nabla\cdot\sigma(\bfu))\cdot\bfv + tr(\sigma(\bfu)\nabla\bfv);\label{chain}\\
tr(\sigma(\bfu)\nabla\bfv) &= tr(\sigma(\bfv)\nabla\bfu).\label{exchange}
\end{align}
Here $tr(\cdot)$ is the trace of matrices. And
\begin{align}\label{isofor}
tr(\sigma(\bfu)\nabla\bar{\bfu})
=(\lambda+\mu)|\nabla\cdot\bfu|^2
+2\mu\left|\epsilon(\bfu)-\frac{1}{2}(\nabla\cdot\bfu)I_2\right|^2.
\end{align}
These formulae are easy to check and we shall omit the proof. Also note that we have similar formulae with $\sigma$ replaced by $\sigma_0$, $\sigma_D$, etc.

Now we give the proof of Lemma \ref{lem1.6}
\begin{proof}[Proof of Lemma \ref{lem1.6}]
First note that $\int_{\partial\Omega}\Lambda_D\bff\cdot\bar{\bff}ds$ and $\int_{\partial\Omega}\Lambda_{\emptyset}\bff\cdot\bar{\bff}ds$ are real. In fact, by definition we have
\[
\int_{\partial\Omega}\Lambda_D\bff\cdot\bar{\bff}ds 
= \int_{\partial\Omega}(\sigma(\bfu)\bfnu)\cdot\bar{\bfu}ds
= \int_{\partial\Omega}(\sigma(\bfu)^T\bar{\bfu})\cdot\bfnu dx.
\]
By divergence theorem and (\ref{chain}) we then get
\begin{equation}\label{ffbar}
\begin{aligned}
\int_{\partial\Omega}\Lambda_D\bff\cdot\bar{\bff}ds
&=\int_{\Omega}(\nabla\cdot\sigma(\bfu))\cdot\bar{\bfu} dx
 +\int_{\Omega} tr(\sigma(\bfu)\nabla\bar{\bfu})dx \\
&=\int_{\Omega}-k^2\bfu\cdot\bar{\bfu}dx
 +\int_{\Omega}tr(\sigma(\bfu)\nabla\mathbf{\bar{u}})dx,
\end{aligned}
\end{equation}
which is real. Similarly $\int_{\partial\Omega}\Lambda_{\emptyset}\bff\cdot\bar{\bff}ds$ is real.

Since $\bfu$ and $\bfu_0$ both equal $\bff$ on $\partial\Omega$, similar to (\ref{ffbar}) we have
\begin{align}
\int_{\partial\Omega}\Lambda_D\bff\cdot\bar{\bff}ds
&=\int_{\Omega}-k^2\bfu\cdot\bar{\bfu}_0 dx
+\int_{\Omega}tr(\sigma(\bfu)\nabla\bar{\bfu}_0) dx;\label{(*)'} \\
\int_{\partial\Omega}\Lambda_{\emptyset}\bff\cdot\bar{\bff}ds
&=\int_{\Omega}-k^2\bfu_0\cdot\bar{\bfu} dx
+\int_{\Omega}tr(\sigma_0(\bfu_0)\nabla\bar{\bfu}) dx.\label{123}
\end{align}
Take complex conjugation of (\ref{(*)'}) and by (\ref{exchange}) we get
\begin{align}
\int_{\partial\Omega}\Lambda_{D}\bff\cdot\bar{\bff}ds
=\int_{\Omega}-k^2\bfu_0\cdot\bar{\bfu} dx
+\int_{\Omega}tr(\sigma(\bfu_0)\nabla\bar{\bfu}) dx.
\label{(**)}\end{align}
Then subtract (\ref{123}) from (\ref{(**)}) we obtain
\begin{align}\label{E.1}
E(\bff)=\int_{\Omega}tr(\sigma_D(\bfu_0)\nabla\bar{\bfu})dx.
\end{align}
On the other hand, 
\begin{align*}
\int_{\Omega}k^2\bfw\cdot\bar{\bfw}dx 
= \int_{\Omega}(k^2\bfu - k^2\bfu_0)\cdot\bar{\bfw}dx 
= -\int_{\Omega}\nabla\cdot(\sigma(\bfu) - \sigma_0(\bfu_0))\cdot\bar{\bfw}dx.
\end{align*}
Note that $\bfw \in H^1_0(\Omega)^2$, thus integration by parts gives
\begin{align}\label{w.2}
k^2\int_{\Omega}\left|\bfw\right|^2dx
=\int_{\Omega}tr\left[(\sigma(\bfu)
-\sigma_0(\bfu_0))\nabla\bar{\bfw}\right]dx.
\end{align}
Now, substituting $\bfu=\bfw+\bfu_0$ into the right-hand side of (\ref{w.2}), and by (\ref{E.1}), we get
\begin{align}
k^2\int_{\Omega}|\bfw|^2dx
=\int_{\Omega}tr(\sigma(\bfw)\nabla\bar{\bfw})dx
-\int_{\Omega}tr(\sigma_D(\bfu_0)\nabla\bar{\bfu}_0)dx
+E(\bff).
\end{align}
And the first identity (\ref{E.a}) follows from (\ref{isofor}).

Similarly, by substituting $\bfu_0=\bfu-\bfw$ into the right-hand side of (\ref{w.2}) we will obtain (\ref{E.b}).
\end{proof}

\setcounter{equation}{0}
\section{The testing boundary data}
In this section we construct the boundary data to be input into $E$ for detecting the location of $\partial D$. For this purpose, we first introduce 
the CGO solutions with complex polynomial phases. 

\subsection{CGO solutions with complex polynomial phases}
We are to construct CGO solutions with complex polynomial phases to
\begin{align}\label{Orieq}
\nabla\cdot\sigma_0(\bfv)+k^2\bfv=\boldsymbol{0}\quad(\mbox{in }\mathbb{R}^2).
\end{align}
Suppose that $\bfv\in \mathcal{C}^{\infty}(\mathbb{R}^2)^2$ satisfies the above eqaution.
By Helmholtz decomposition, we can write
\[
\bfv=\nabla\varphi+\nabla^{\perp}\psi 
\]
for some smooth scalar functions $\varphi$ and $\psi$, where
$\nabla^{\perp}\psi:=(-\partial_2\psi,\partial_1\psi)^T$ (and here we also regard $\nabla\varphi$ as a column vector). Then $\varphi$ and $\psi$ satisfy
\[
\nabla((\lambda_0+2\mu_0)\Delta\varphi+k^2\varphi) + \nabla^{\perp}(\mu_0\Delta\psi+k^2\psi)=0.
\]
Let $k_1=\left(\frac{k^2}{\lambda_0+2\mu_0}\right)^{1/2}$ and $k_2=\left(\frac{k^2}{\mu_0}\right)^{1/2}$. From the above equation it's easy to see that conversely for any $\varphi$ and $\psi\in\mathcal{C}^{\infty}(\mathbb{R}^2)$ satisfying
\begin{align}
\left\{
\begin{array}{ll}
\vartriangle\varphi+k_1^2\varphi&=0\\
\vartriangle\psi+k_2^2\psi&=0,
\end{array}\right.
\label{(*)}\end{align}
$\bfv=\nabla\varphi+\nabla^{\perp}\psi$ is a solution to
(\ref{Orieq}).
Moreover, if $\varphi$ and $\psi$ are CGO solutions to (\ref{(*)}), then $\bfv$ is a CGO solution to (\ref{Orieq}). 

It is not difficult to construct CGO solutions to (\ref{(*)}) by using the Vekua transform, which transforms a harmonic function to a solution to a Helmholtz equation. 
Precisely, for any real constant $\omega$, the Vekua transform $T_{\omega}$ associated with $\omega$ is defined as follows: 
\[
T_{\omega}(u)(\mathbf{x})=u(\mathbf{x})-\int^1_0 u(t\mathbf{x})\frac{\partial}{\partial t}\left\{J_0(\omega |\mathbf{x}|\sqrt{1-t})\right\}dt
\]
for a function $u$, where $J_0$ is the zero order Bessel function of the first kind. 
If $u$ is a harmonic function, then $T_{\omega}(u)$ satisfies 
\[
\vartriangle (T_{\omega}(u))+\omega^2 (T_{\omega}(u))=0.
\]
This formula is derived by I. N. Vekua. 
One can refer to \cite{V1967book} for details and other related results.

In the following we adopt the same idea as in \cite{UWW2009} and \cite{NUW2011} to construct CGO solutions with complex polynomial phases. Given $N\in\mathbb{N}$ and $\beta\in\mathbb{C}$ with $|\beta|=1$,
let $\rho=\rho_{N,\beta}$ be the function on $\mathbb{R}^2$ defined by
\begin{align}\label{rhonbeta}
\rho(\mathbf{x})=\beta(x_1+ix_2)^N,
\end{align}
which, by regarding $\mathbb{R}^2$ as the complex plane $\mathbb{C}$, is a complex polynomial. 
Then we define
\begin{align}
\Gamma=\Gamma_{N,\beta}
:=\left\{r(\cos\theta,\sin\theta):r>0,|\theta-\theta_{\boldsymbol{0}}|<\frac{\pi}{2N}\right\},
\label{gamma}\end{align}
the open cone with axis $\theta=\theta_{\boldsymbol{0}}$ and open angle $\pi/N$, where $\theta_{\boldsymbol{0}}$ is such that $\beta=e^{-iN\theta_{\boldsymbol{0}}}$.
Let $\tau=\tau_{N,\beta}:=Re\{\rho_{N,\beta}\}$.
Note that in $\Gamma$ we have
\[
\tau(\mathbf{x})=r^N \cos N(\theta-\theta_{\boldsymbol{0}}) >0,
\]
where $\mathbf{x}=r(\cos\theta,\sin\theta)$.

Now for any constant $h>0$, $e^{\frac{\rho}{h}}$ is harmonic (since it is holomorphic by regarding $\mathbb{R}^2$ as $\mathbb{C}$), and hence
\[
\varphi=\varphi_{h}
:=T_{k_1}(e^{\frac{\rho}{h}})
\quad\mbox{and}\quad
\psi=\psi_{h}
:=T_{k_2}(e^{\frac{\rho}{h}})
\]
satisfy (\ref{(*)}).
Moreover, $\varphi_{h}$ and $\psi_{h}$ are CGO solutions.
In fact, we can write
\begin{align}
\varphi_{h}(\mathbf{x})=e^{\frac{\rho(\mathbf{x})}{h}}(1+R_{h,1}(\mathbf{x}))
\quad\mbox{and}\quad
\psi_{h}(\mathbf{x})=e^{\frac{\rho(\mathbf{x})}{h}}(1+R_{h,2}(\mathbf{x}))
\label{*.}\end{align}
with $R_{h,l}$ ($l=1,2$) satisfying the following estimates in $\Gamma$:
\begin{align}
\begin{split}
&\left|R_{h,l}\right|\leq h\frac{k_l^2|\mathbf{x}|^2}{4\tau(\mathbf{x})};\\
&\left|\frac{\partial R_{h,l}(\mathbf{x})}{\partial x_j}\right|
\leq\frac{Nk_{l}^2|\mathbf{x}|^{N+1}}{4\tau(\mathbf{x})}
+h\frac{k_{l}^2|x_j|}{2\tau(\mathbf{x})},\mbox{ }j=1,2.
\end{split}
\label{(R.1)}
\end{align}
These estimates are established in \cite[Lemma 2.1]{NUW2011}. In this study we will also need estimates of the second derivatives of $R_{h,l}$, which are not hard to derive in the same manner as the derivation of (\ref{(R.1)}) given in \cite{NUW2011}. 
Actually, by repeatedly applying the following well-known recurrence formulae
\[
\frac{d}{dt}(tJ_1)=tJ_0(t),\mbox{ }\frac{dJ_0(t)}{dt}=-J_1(t),\mbox{ }\forall t
\geq 0,
\]
where $J_1$ is the Bessel function of the first kind of order $1$,
and using the basic estimates
\[
|J_1(t)|\leq\frac{t}{2},\mbox{ }|J_0(t)|\leq 1,\mbox{ }\forall t\geq 0,
\]
the verification of the following estimates are direct (although somewhat lengthy):

\begin{align}
\begin{split}
\left|\frac{\partial^2 R_{h,l}(\mathbf{x})}{\partial x_i\partial x_j}\right|
&\leq\frac{1}{h}\left(\frac{k_{l}^2N^2|\mathbf{x}|^{2N}}{4\tau_N(\mathbf{x})}\right)\\
&\quad+\left(\frac{k_{l}^2N(N-1)|\mathbf{x}|^N}{4\tau_N(\mathbf{x})}
+\frac{k_{l}^2N|\mathbf{x}|^{N-1}(|x_i|+|x_j|)}{2\tau_N(\mathbf{x})}\right)\\
&\quad+h\left(\frac{k_{l}^4|x_i||x_j|}{4\tau_N(\mathbf{x})}
+\frac{k_{l}^2\delta_{ij}}{2\tau_N(\mathbf{x})}\right)
\end{split}
\label{R.4}\end{align}
in $\Gamma$, for $1\leq l,i,j\leq 2$, where $\delta_{ij}$ is the Kronecker delta.

Let $diam(\Omega)$ denote the diameter of $\Omega$. By (\ref{(R.1)}) and (\ref{R.4}) there exists a constant $C_R=C_R(\lambda_0,\mu_0,k,N,\beta,diam(\Omega))>0$ such that for any $0<h\leq 1$, $1\leq l,i,j\leq 2$ and $\mathbf{x}\in \Gamma\cap\Omega$,
\begin{align}
\begin{split}
&\left|R_{h,l}(\mathbf{x})\right|\leq h\frac{C_R}{\tau_N(\mathbf{x})};\\
&\left|\frac{\partial R_{h,l}(\mathbf{x})}{\partial x_j}\right|\leq\frac{C_R}{\tau_N(\mathbf{x})};\\
&\left|\frac{\partial^2 R_{h,l}(\mathbf{x})}{\partial x_i\partial x_j}\right|
\leq\frac{1}{h}\frac{C_R}{\tau_N(\mathbf{x})}.
\end{split}
\label{R.6}\end{align}

Now $\bfv=\bfv_h:=\nabla\varphi_h+\nabla^{\perp}\psi_h$ is a CGO solution to (\ref{Orieq}).
$\bfv_h$ can be written down explicitly as follows:
\[
\bfv_h(\mathbf{x})=e^{\frac{\rho(\mathbf{x})}{h}}
\left (
\begin{array}{l}
Q_{h,1}(\mathbf{x})\\
Q_{h,2}(\mathbf{x})
\end{array}\right ),
\]
where
\begin{align}
\begin{split}
Q_{h,1}(\mathbf{x})&=\left[\frac{1}{h}\frac{\partial\rho(\mathbf{x})}{\partial x_1}(1+R_{h,1}(\mathbf{x}))
+\frac{\partial R_{h,1}(\mathbf{x})}{\partial x_1}\right]\\
&\quad-\left[\frac{1}{h}\frac{\partial\rho(\mathbf{x})}{\partial x_2}(1+R_{h,2}(\mathbf{x}))
+\frac{\partial R_{h,2}(\mathbf{x})}{\partial x_2}\right]
\end{split}
\label{R.1h}\end{align}
and
\begin{align}
\begin{split}
Q_{h,2}(\mathbf{x})&=\left[\frac{1}{h}\frac{\partial\rho(\mathbf{x})}{\partial x_2}(1+R_{h,1}(\mathbf{x}))
+\frac{\partial R_{h,1}(\mathbf{x})}{\partial x_2}\right]\\
&\quad+\left[\frac{1}{h}\frac{\partial\rho(\mathbf{x})}{\partial x_1}(1+R_{h,2}(\mathbf{x}))
+\frac{\partial R_{h,2}(\mathbf{x})}{\partial x_1}\right].
\end{split}
\label{R.2h}\end{align}
Thus for $0<h\leq 1$ and $i=1, 2$, from (\ref{R.6}) we have the following estimates for $Q_{h,i}$ in $\Gamma\cap\Omega$: 
\begin{align}
\begin{split}
|Q_{h,i}(\mathbf{x})|&\leq\frac{2N|\mathbf{x}|^{N-1}}{h}\left(1+h\frac{C_R}{\tau(\mathbf{x})}\right)
+\frac{2C_R}{\tau(\mathbf{x})}\\
&\leq \frac{\tilde{C}_R}{h}+\frac{\tilde{C}_R}{\tau(\mathbf{x})},
\end{split}
\label{R.3}\end{align}
and for $j=1,2$
\begin{align}
\begin{split}
\left|\frac{\partial Q_{h,i}(\mathbf{x})}{\partial x_j}\right|
&\leq\frac{2N|\mathbf{x}|^{N-2}}{h}\left[(N+|\mathbf{x}|)\frac{C_R}{\tau(\mathbf{x})}+N\right]
+\frac{2C_R}{\tau(\mathbf{x})}\\
&\leq \frac{\tilde{C}_R}{h}\left(1+\frac{1}{\tau(\mathbf{x})}\right)+\frac{\tilde{C}_R}{\tau(\mathbf{x})}.
\end{split}
\label{R.7}\end{align}
where $\tilde{C}_R=\tilde{C}_R(\lambda_0,\mu_0,k,N,\beta,diam(\Omega))>0$ is a constant.

\subsection{The testing boundary data}
Note that from the discussion above the CGO solutions $\bfv_h$ are {\it controllable} in $\Gamma\cap\Omega$. In the following we go on to follow the idea in \cite{UWW2009} and \cite{NUW2011} to modify $\bfv_h$ into a family of functions localized in $\Gamma$. 

For $t>0$, let
\begin{align}\label{ellt}
\ell_t:=\{\mathbf{x}\in\Gamma:\tau(\mathbf{x})=\frac{1}{t}\},
\end{align}
the level curve of $\tau$ in $\Gamma$ at $\frac{1}{t}$. In fact, any level curve of $\tau=\tau_{N,\beta}$ has $N$ branches, and the cone $\Gamma=\Gamma_{N,\beta}$ just contains one branch with the two edges of $\Gamma$ being the asymptotes of that branch. Also note that when $t$ is larger, the curve $\ell_t$ is closer to the origin. (We refer to Figure 2 in \cite{NUW2011} for an illustration.)
Then for $d>0$ let
\begin{align}\label{gammad}
\Gamma_d=\Gamma_{N,\beta,d}:=\overline{\bigcup\limits_{0<t<d}\ell_{t}}.
\end{align}
Note that for $d_1>d_2>0$ we have $\Gamma_{d_2}\subset\Gamma_{d_1}$. 

In the following we fix an $\ve>0$ and a compact interval $J\subset (0,\infty)$.
Let $\{\phi_{d}\}_{d\in J}$ be a family of smooth cut-off functions such that
\begin{itemize}
\item[(i)] 
$0 \le \phi_{d}(\mathbf{x}) \le 1$,
\item[(ii)] 
$\phi_{d}(\mathbf{x})=1$ (resp. $0$) for $\bfx\in\Gamma_{d+\ve}$ (resp. $\bfx\in\mathbb{R}^2\setminus\Gamma_{d+2\ve}$), and
\item[(iii)]
for some $C_{\phi}>0$, we have
$|\partial_{\bfx}^{\boldsymbol{\alpha}}\phi_{d}(\bfx)| \leq C_{\phi}$
for each multiindex $\boldsymbol{\alpha}$ with $|\boldsymbol{\alpha}|\leq 2$, for each $\mathbf{x}\in\Omega$ and for each $d\in J$.
\end{itemize}
The existence of such family $\{\phi_d\}$ is obvious and we omit a precise construction.

Now let 
\begin{align}\label{pdh}
\bfp_{d,h}(\bfx)
:=\phi_{d}(\bfx)e^{-\frac{1}{hd}}\bfv_h
\in\mathcal{C}^{\infty}(\mathbb{R}^2)^2.
\end{align}
It is the traces of these $\bfp_{d,h}$ on $\partial\Omega$ that will be the testing data to be input into $E$.
In fact, we will see that the behavior of $E(\bfp_{d,h}|_{\partial\Omega})$ as $h\rightarrow 0^+$ tells whether $\Gamma_d$ intersects $D$ or not.
Now note that although $\bfp_{d,h}$ is {\it controllable} from the discussion above, it is no longer a solution to (\ref{Orieq}). However, to get information from $E(\bfp_{d,h}|_{\partial\Omega})$ we will need estimates related to the solution of (\ref{Orieq}) with boundary condition $\bfp_{d,h}|_{\partial\Omega}$. But indeed for small $h$ controllability of $\bfp_{d,h}$ gives controllability of the true solution of (\ref{Orieq}) with the same boundary condition. We explain this precisely in the following. 

Let $\bfu_{0,d,h}$ satisfy
\begin{align}\label{u0dh}
\left\{
\begin{array}{ll}
\nabla\cdot\sigma_0(\bfu_{0,d,h})
+k^2\bfu_{0,d,h}=0 & \mbox{ in }\Omega\\
\bfu_{0,d,h}=\bfp_{d,h}|_{\partial \Omega} & \mbox{ on }\partial \Omega.
\end{array}\right.
\end{align}
And let 
\begin{align}\label{wh}
\bfw_{h}=\bfp_{d,h}-\bfu_{0,d,h},
\end{align}
then $\bfw_{h}$ satisfies
\begin{align}
\left\{
\begin{array}{ll}
\nabla\cdot\sigma_0(\bfw_{h})+k^2\bfw_{h}
=\nabla\cdot\sigma_0(\bfp_{d,h})+k^2\bfp_{d,h} &\mbox{ in }\Omega\\
\bfw_{h}=0 & \mbox{ on }\partial\Omega.
\end{array}\right.
\label{wpnth}\end{align}
Let 
\begin{align}\label{gh}
\bfg_h=\nabla\cdot\sigma_0(\bfp_{d,h})+k^2\bfp_{d,h},
\end{align}
then we have the following lemma.

\begin{lem}\label{lem3.1}
There exists positive constants $C_1$ and $C$ (depending on $\Omega,\lambda_0,\mu_0,k$) such that for $0<h\leq 1$ and $d\in J$
\[
\|\bfw_h\|_{H^1(\Omega)^2}\leq C_1\|\bfg_h\|_{L^2(\Omega)^2}
\leq \frac{C}{h^2}e^{-\frac{1}{h}(\frac{1}{d}-\frac{1}{d+\ve})}.
\]
In particular, there exists $0<h_0<1$ such that for $0<h<h_0$ and $d\in J$, there is a positive constant $C'=C'(\Omega,\lambda_0,\mu_0,k)$ such that
\[
\|\bfw_h\|_{H^1(\Omega)^2}\leq C_1\|\bfg_h\|_{L^2(\Omega)^2}
\leq C'e^{-\frac{1}{h}(\frac{1}{d}-\frac{1}{d+\ve})}.
\]
\end{lem}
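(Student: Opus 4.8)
The plan is to prove the two inequalities in turn: the first is an abstract well-posedness estimate for the background operator, while the second is an explicit pointwise bound on $\bfg_h$ extracted from the commutator structure of $\nabla\cdot\sigma_0+k^2$ applied to $\bfp_{d,h}$.

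For the first inequality I would use that $k^2$ is, by hypothesis, not a Dirichlet eigenvalue of $-\nabla\cdot\sigma_0$. By the strong ellipticity in (\ref{elliptic}) together with Korn's inequality, the sesquilinear form associated with $\nabla\cdot\sigma_0+k^2$ on $H^1_0(\Omega)^2$ satisfies a G{\aa}rding inequality, so $-\nabla\cdot\sigma_0-k^2$ is Fredholm of index zero on $H^1_0(\Omega)^2$; since $k^2$ is not an eigenvalue it is injective, hence an isomorphism onto $H^{-1}(\Omega)^2$. As $\bfw_h\in H^1_0(\Omega)^2$ solves (\ref{wpnth}) with right-hand side $\bfg_h\in L^2(\Omega)^2$, the bounded inverse yields $\|\bfw_h\|_{H^1(\Omega)^2}\le C_1\|\bfg_h\|_{H^{-1}(\Omega)^2}\le C_1\|\bfg_h\|_{L^2(\Omega)^2}$ with $C_1=C_1(\Omega,\lambda_0,\mu_0,k)$ independent of $h$ and $d$.

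For the second inequality I would exploit that $\bfv_h$ itself solves $\nabla\cdot\sigma_0(\bfv_h)+k^2\bfv_h=0$. Writing $\bfp_{d,h}=e^{-1/(hd)}\phi_d\bfv_h$ and applying the Leibniz rule, the top-order term $e^{-1/(hd)}\phi_d(\nabla\cdot\sigma_0(\bfv_h)+k^2\bfv_h)$ vanishes, and the zeroth-order $k^2$ contributions cancel, so $\bfg_h$ reduces to a commutator carried entirely by derivatives of $\phi_d$:
\[
\bfg_h=e^{-\frac1{hd}}\big[\nabla\cdot\sigma_0(\phi_d\bfv_h)-\phi_d\,\nabla\cdot\sigma_0(\bfv_h)\big],
\]
whose integrand is schematically (with coefficients bounded in terms of $\lambda_0,\mu_0$) of the form $(\nabla\phi_d)(\nabla\bfv_h)+(\nabla^2\phi_d)\bfv_h$. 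In particular $\bfg_h$ is supported in $\mathrm{supp}\,\nabla\phi_d\subset\Gamma_{d+2\ve}\setminus\Gamma_{d+\ve}$, where $1/(d+2\ve)\le\tau(\bfx)\le 1/(d+\ve)$. On this transition region I would insert $\bfv_h=e^{\rho/h}(Q_{h,1},Q_{h,2})^T$, use $|e^{\rho/h}|=e^{\tau/h}\le e^{1/(h(d+\ve))}$, and apply (\ref{R.3}) and (\ref{R.7}) to bound $|\bfv_h|$ and $|\nabla\bfv_h|$. Crucially, since $d$ ranges over the compact set $J$ and $\ve$ is fixed, $\tau$ is bounded below there, so the $1/\tau$ factors in (\ref{R.3})--(\ref{R.7}) are harmless and the dominant power is $|\nabla\bfv_h|\lesssim h^{-2}e^{\tau/h}$. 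Combining with the prefactor $e^{-1/(hd)}$ and the cut-off bound $C_\phi$ from (iii) gives the pointwise estimate $|\bfg_h(\bfx)|\le C h^{-2}e^{-\frac1h(\frac1d-\frac1{d+\ve})}$ on $\Omega$; integrating over the bounded support produces $\|\bfg_h\|_{L^2(\Omega)^2}\le C h^{-2}e^{-\frac1h(\frac1d-\frac1{d+\ve})}$, the first displayed bound. The ``in particular'' statement then follows because, with $c_0:=\inf_{d\in J}(\tfrac1d-\tfrac1{d+\ve})>0$, the prefactor $h^{-2}$ is dominated by the exponential for small $h$; choosing $h_0$ accordingly absorbs it into a constant $C'$ (at the cost of an arbitrarily small, hence negligible, reduction of the rate), yielding the clean exponential bound for $0<h<h_0$.

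I expect the main obstacle to be the pointwise commutator estimate, specifically keeping careful track of the powers of $h$ generated when derivatives fall on the exponential factor $e^{\rho/h}$ versus on the amplitudes $Q_{h,i}$, and verifying that every $1/\tau$ contribution coming from (\ref{R.3})--(\ref{R.7}) is controlled by the lower bound on $\tau$ over the transition annulus; this is precisely where the estimates of the previous subsection are used. The well-posedness step is routine by comparison.
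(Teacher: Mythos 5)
Your proof follows essentially the same route as the paper's: the first inequality is the classical well-posedness bound for the Dirichlet problem, and the second comes from the Leibniz expansion of $\nabla\cdot\sigma_0(\phi_d\bfv_h)+k^2\phi_d\bfv_h$, which kills the $\phi_d(\nabla\cdot\sigma_0(\bfv_h)+k^2\bfv_h)$ term and localizes $\bfg_h$ to $\Gamma_{d+2\ve}\setminus\Gamma_{d+\ve}$, where (\ref{R.3})--(\ref{R.7}) and $\tau\le\frac{1}{d+\ve}$ give exactly the stated $h^{-2}$ bound. Your handling of the ``in particular'' clause is in fact more honest than the paper's (which offers no argument for it): absorbing $h^{-2}$ into a constant does require a slight reduction of the exponential rate, a loss that is harmless in every subsequent application but is silently elided in the lemma as stated.
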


\begin{proof}
Note that in this paper for any two vectors $\bfa$ and $\bfb$, we define $\bfa\otimes\bfb$ to be the matrix whose $ij$-th entry is $a_i b_j$.

That $\|\bfw_{h}\|_{H^1(\Omega)^2}\leq C_1\|\bfg_h\|_{L^2(\Omega)^2}$
for some $C_1$ is classical.
So we need only to estimate $\|\bfg_h\|_{L^2(\Omega)^2}$.

Since $\bfp_{d,h}(\mathbf{x})=\phi_d(\mathbf{x})e^{-\frac{1}{hd}}\bfv_h$, we have
\[
\begin{split}
\bfg_h
&= e^{-\frac{1}{hd}}
\Big{\{}\lambda_0\nabla(\nabla\cdot(\phi_d\bfv_h))
+\mu_0\nabla\cdot(\nabla(\phi_d\bfv_h) + (\nabla(\phi_d\bfv_h))^T)
+k^2\phi_d\bfv_h\Big{\}}\\
&= e^{-\frac{1}{hd}}
\Big{\{}\lambda_0\big{[}\nabla(\nabla\phi_d\cdot\bfv_h)
+\nabla\phi_d(\nabla\cdot\bfv_h)\big{]}\\
&\qquad\qquad +\mu_0\nabla\cdot\big{[}\bfv_h\otimes\nabla\phi_d
+\nabla\phi_d\otimes\bfv_h\big{]}\\
&\qquad\qquad+\mu_0(\nabla\bfv_h+(\nabla\bfv_h)^T)
\nabla\phi_d\\
&\qquad\qquad +\phi_d[\nabla\cdot\sigma_0(\bfv_h)+k^2\bfv_h]\Big{\}}.
\end{split}
\]
Because $\nabla\cdot\sigma_0(\bfv_h)+k^2\bfv_h=0$ 
and $\nabla\phi_d=0$ outside $\Gamma_{d+2\ve}\setminus\Gamma_{d+\ve}$, we have
\begin{align}
\|\bfg_h\|_{L^2(\Omega)^2}\leq C_g e^{-\frac{1}{hd}}
\|\bfv_h\|_{H^1((\Gamma_{d+2\ve}\setminus\Gamma_{d+\ve})\cap\Omega)^2}
\label{g}\end{align}
for some positive constant $C_g=C_g(\lambda_0,\mu_0,C_{\phi})$.\\ 

By (\ref{R.3}), for $\mathbf{x}\in (\Gamma_{d+2\ve}\setminus\Gamma_{d+\ve})\cap\Omega$,
\[
\begin{split}
|\bfv_h(\mathbf{x})|&= e^{\frac{\tau(\mathbf{x})}{h}}\sqrt{|Q_{h,1}(\mathbf{x})|^2+|Q_{h,2}(\mathbf{x})|^2}\\
&\leq \sqrt{2}e^{\frac{\tau(\mathbf{x})}{h}}
\left\{\frac{\tilde{C}_R}{h}+\frac{\tilde{C}_R}{\tau(\mathbf{x})}\right\}
\leq \frac{C'_R}{h} e^{\frac{\tau(\mathbf{x})}{h}}
\end{split}
\]
for some positive constant $C'_R=C'_R(\lambda_0,\mu_0,k,diam\Omega)$.
Hence we have the following estimate:
\[
\|\bfv_h\|_{L^2((\Gamma_{d+2\ve}
\setminus\Gamma_{d+\ve})\cap\Omega)^2}
\leq\frac{C'_R}{h}\left(\int_{(\Gamma_{d+2\ve}
\setminus\Gamma_{d+\ve})\cap\Omega} e^{\frac{2\tau(\mathbf{x})}{h}}dx\right)^{\frac{1}{2}}.
\]
Similarly by (\ref{R.3}) and (\ref{R.7}) we have
\[
\|\nabla\bfv_h\|_{L^2((\Gamma_{d+2\ve}\setminus\Gamma_{d+\ve})\cap\Omega)^2}
\leq \frac{C^{''}_R}{h^2}
\left(\int_{(\Gamma_{d+2\ve}\setminus\Gamma_{d+\ve})\cap\Omega}e^{\frac{2\tau(\mathbf{x})}{h}}dx\right)^{\frac{1}{2}}.
\]
Since
\[
\int_{(\Gamma_{d+2\ve}\setminus\Gamma_{d+\ve})\cap\Omega}e^{\frac{2\tau(\mathbf{x})}{h}}dx
\leq |(\Gamma_{d+2\ve}\setminus\Gamma_{d+\ve})\cap\Omega|e^{\frac{2}{h}\frac{1}{d+\ve}},
\]
we have
\begin{align}
\|\bfg_h\|_{L^2(\Omega)^2}\leq C_ge^{-\frac{1}{hd}}\|\bfv_h\|
_{H^1((\Gamma_{d+2\ve}\setminus\Gamma_{d+\ve})\cap\Omega)^2}
\leq \frac{C}{h^2}e^{-\frac{1}{h}(\frac{1}{d}-\frac{1}{d+\ve})},
\label{vnh}\end{align}
where $C$ depends only on $\lambda_0,\mu_0,k$ and $\Omega$.

\end{proof}
\setcounter{equation}{0}
\section{The main theorem for the reconstruction of unknown inclusions}
We now come to considering our inverse problem of reconstructing $D$. For the main theorem we make the following three assumptions (in addition to those already made in the introduction) throughout this section. 
\begin{itemize}
\item[1.] We assume $\nabla\cdot\sigma_0$ and $\nabla\cdot\sigma$ satisfy the strong convexity condition (but not only the strong elliptic condition (\ref{elliptic})):
\begin{align*}
&\lambda_0+\mu_0>0,\mbox{ }\mu_0>0;\\
&\lambda+\mu>0,\mbox{ }\mu>0\quad\mbox{on }\Omega.
\end{align*}
Thus, in particular, Lemma \ref{lem1.7} applies.
\item[2.] $(\lambda_D+\mu_D)\mu_D\geq 0$ on $D$.
\item[3.] For any $\mathbf{y}\in\partial D$, there exists a ball $B_r(\mathbf{y})$ such that one of the following jump conditions holds:
\begin{align}\label{JC}
\begin{array}{llll}
\textup{(i)}&\mu_D(\mathbf{x})>r,&\lambda_D(\mathbf{x})+\mu_D(\mathbf{x})\geq 0,&\forall \mathbf{x}\in B_r(\mathbf{y})\cap D;\\
\textup{(ii)}&\mu_D(\mathbf{x})<-r,&\lambda_D(\mathbf{x})+\mu_D(\mathbf{x})\leq 0,&\forall \mathbf{x}\in B_r(\mathbf{y})\cap D.\\
\end{array}
\end{align}
\end{itemize}

Now assume the origin $\bf{0}$ is outside $\bar{\Omega}$.\footnote{In general, for $\bfa=(a_1,a_2)^T$ a point outside $\bar{\Omega}$, we should use $\rho=\beta((x_1-a_1)+i(x_2-a_2))^N$, and similar modifications of $\Gamma$, $\Gamma_d$, etc., and there is a similar result as Theorem \ref{thm4.1}. 
However as we can always set the coordinates so that $\mathbf{0}\notin\bar{\Omega}$ in practice, such consideration is not needed.} As in section 3, in the following we fix an $N\in\mathbb{N}$, a $\beta\in\mathbb{C}$ with $|\beta|=1$, an $\varepsilon>0$, and a compact interval $J\subset(0,\infty)$. And
recall the definition of $\rho$, $\Gamma$, $\ell_t$, $\Gamma_d$, and $\bfp_{d,h}$ in 
(\ref{rhonbeta}), (\ref{gamma}), (\ref{ellt}), (\ref{gammad}), and (\ref{pdh}) respectively. Also recall that we use $\tau$ to denote $Re(\rho)$. Let
\[
s_*:=\left\{
\begin{aligned}
\sup_{\mathbf{x}\in D\cap\Gamma}\tau(\mathbf{x}),&\quad\mbox{if } 
D\cap\Gamma\neq\emptyset\\
0\,,&\quad\mbox{if }D\cap\Gamma=\emptyset.
\end{aligned}\right.
\]
Note that $D\cap\Gamma\ne\emptyset$ if and only if $s_*>0$, and in this situation $\ell_{1/{s_*}}$ is a curve just touching $\partial D$, i.e. $\ell_{1/{s_*}}\cap\bar{D}=\ell_{1/s_*}\cap\partial{D}\ne\emptyset$.

For notational simplicity let $\bff_{d,h}:=\mathbf{p}_{d,h}|_{\partial \Omega}$. 
Recall that $\bfu_{0,d,h}$ satisfies
\[
\left\{
\begin{aligned}
&\nabla\cdot\sigma_0(\bfu_{0,d,h})+k^2\bfu_{0,d,h}=0\quad\mbox{in }\Omega\\
&\bfu_{0,d,h}=\bff_{d,h}\quad\mbox{on }\partial\Omega.
\end{aligned}\right.
\]
Similarly let $\bfu_{d,h}$ be the solution when the inclusion $D$ exists: 
\[
\left\{
\begin{aligned}
&\nabla\cdot\sigma(\bfu_{d,h})+k^2\bfu_{d,h}=0,\quad\mbox{in }\Omega\\
&\bfu_{d,h}=\bff_{d,h}\quad\mbox{on }\partial\Omega.
\end{aligned}\right.
\]
Now let $\bfw_{d,h}=\bfu_{d,h}-\bfu_{0,d,h}$. We have the following two inequalities from Lemma \ref{lem1.7}: 
\begin{align}
&\begin{aligned}\label{pf.U}
E(\bff_{d,h})&\leq \int_D(\lambda_D+\mu_D)|\nabla\cdot\bfu_{0,d,h}|^2dx\\
&\quad+2\int_D\mu_D
\left|\epsilon(\bfu_{0,d,h})-\frac{1}{2}(\nabla\cdot\bfu_{0,d,h})I_2\right|^2dx
+k^2\|\bfw_{d,h}\|_{L^2(\Omega)}^2;
\end{aligned}\\
&\begin{aligned}\label{pf.L}
E(\bff_{d,h})&\geq \int_D\frac{(\lambda_0+\mu_0)(\lambda_D+\mu_D)}{\lambda+\mu}
|\nabla\cdot\bfu_{0,d,h}|^2dx\\
&\quad+2\int_D\frac{\mu_0\mu_D}{\mu}
\left|\epsilon(\bfu_{0,d,h})-\frac{1}{2}(\nabla\cdot\bfu_{0,d,h})I_2\right|^2dx
-k^2\|\bfw_{d,h}\|_{L^2(\Omega)}^2.
\end{aligned}
\end{align}
They are the key to the following main theorem of this paper.

\begin{thm}\label{thm4.1}
For $d\in J$ and $h>0$ small enough, the following conclusions hold:
\begin{itemize}
\item[\textup{(A)}] If $\bar{D}\cap\Gamma_d=\emptyset$, then 
\[
|E(\bff_{d,h})|\leq C h^{-4}e^{-\frac{2}{h}(\frac{1}{d}-s_d)}
\]
for some $C>0$ independent of $h$, where $s_d=\max(\frac{1}{d+\ve},s_*)<\frac{1}{d}$.

\item[\textup{(B)}] If $D\cap\Gamma_d\ne\emptyset$ and $D$ has continuous boundary, then there exists a constant $\delta$, $0 < \delta < s_*-\frac{1}{d}$, such that
\[
|E(\bff_{d,h})| \ge Ch^{-3}e^{\frac{2}{h}(s_*-\frac{1}{d}-\delta)}
\]
for some $C>0$ independent of $h$.
\item[\textup{(B$'$)}] If $\bar{D}\cap\Gamma_d\neq\emptyset$ and $D$ has $C^{0,\alpha}$ boundary for $\frac{1}{3}<\alpha\leq 1$, then
\[
|E(\bff_{d,h})|\geq Ch^{-3+\frac{1}{\alpha}}e^{\frac{2}{h}(s_*-\frac{1}{d})}
\]
for some $C>0$ independent of $h$.
\end{itemize}
\end{thm}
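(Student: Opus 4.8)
The plan is to feed the explicit test data $\mathbf{f}_{d,h}=\mathbf{p}_{d,h}|_{\partial\Omega}$ into the two-sided bound of Lemma \ref{lem1.7}, i.e. into the already-derived inequalities (\ref{pf.U}) and (\ref{pf.L}), and to read off the three regimes from the competition between the leading CGO integral over $D$ and the remainder $k^2\|\mathbf{w}_{d,h}\|_{L^2(\Omega)}^2$. The first reduction is to replace the unknown interior field $\mathbf{u}_{0,d,h}$ by the explicit $\mathbf{p}_{d,h}=\phi_d e^{-1/(hd)}\mathbf{v}_h$: since $\mathbf{u}_{0,d,h}=\mathbf{p}_{d,h}-\mathbf{w}_h$ with $\|\mathbf{w}_h\|_{H^1(\Omega)}$ exponentially small by Lemma \ref{lem3.1} (rate $e^{-(1/d-1/(d+\varepsilon))/h}$), every integral over $D$ may be computed with $\mathbf{p}_{d,h}$ up to an error of that smaller exponential order. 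Using $\nabla\cdot\mathbf{v}_h=-k_1^2\varphi_h=O(e^{\tau/h})$ while the traceless strain $\epsilon(\mathbf{v}_h)-\frac12(\nabla\cdot\mathbf{v}_h)I_2$ is of size $h^{-2}e^{\tau/h}$ (from the second derivatives of $\varphi_h,\psi_h$ controlled by (\ref{R.3})--(\ref{R.7})), the shear term dominates and every relevant integrand over $D$ is comparable to $h^{-4}e^{2(\tau-1/d)/h}$.

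For part (A), where $\bar D\cap\Gamma_d=\emptyset$ forces $\tau\le s_*<1/d$ on $D\cap\Gamma$ and hence $s_d<1/d$, both the upper bound (\ref{pf.U}) and the magnitude of the lower bound (\ref{pf.L}) are dominated by $\int_D|\nabla\mathbf{u}_{0,d,h}|^2$ together with the remainder. The bound $e^{2(\tau-1/d)/h}\le e^{2(s_*-1/d)/h}$ on $D$ and the Lemma \ref{lem3.1} rate combine to $e^{2(s_d-1/d)/h}$, and the prefactor $h^{-4}$ comes from the two derivatives on the CGO phase; this yields $|E(\mathbf{f}_{d,h})|\le Ch^{-4}e^{-2(1/d-s_d)/h}$ once the remainder is shown to be of the same order.

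For (B) and (B$'$), where $s_*>1/d$, I would localize near a touching point $\mathbf{y}_0\in\ell_{1/s_*}\cap\partial D$ and invoke the jump condition (\ref{JC}): in case (i) I keep the positive term $2\int_D\frac{\mu_0\mu_D}{\mu}|\epsilon(\mathbf{u}_{0,d,h})-\frac12(\nabla\cdot\mathbf{u}_{0,d,h})I_2|^2$ in (\ref{pf.L}), and in case (ii) I instead use (\ref{pf.U}) to bound $-E(\mathbf{f}_{d,h})$ from below, the sign of $\mu_D$ being fixed on $B_r(\mathbf{y}_0)\cap D$ while the contribution of $D\setminus B_r(\mathbf{y}_0)$ is exponentially negligible because there $\tau\le s_*-c$. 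Both cases reduce to a lower bound for $\int_{B_r(\mathbf{y}_0)\cap D}|\epsilon(\mathbf{p}_{d,h})-\frac12(\nabla\cdot\mathbf{p}_{d,h})I_2|^2\gtrsim h^{-4}\int_{B_r(\mathbf{y}_0)\cap D}e^{2(\tau-1/d)/h}\,dx$ (one must check the leading shear coefficient does not vanish). This is a Laplace-type integral concentrated at $\mathbf{y}_0$, and the boundary regularity enters only here: with mere continuity I bound it below by $e^{-2\delta/h}$ times the positive measure of $\{\tau>s_*-\delta\}\cap D\cap B_r(\mathbf{y}_0)$, producing the $\delta$-loss and the bound $Ch^{-3}e^{2(s_*-1/d-\delta)/h}$ of (B); with a $C^{0,\alpha}$ boundary the graph estimate $\gamma(\zeta)\le L|\zeta|^\alpha$ gives $|\{\tau>s_*-\eta\}\cap D\cap B_r(\mathbf{y}_0)|\gtrsim\eta^{1+1/\alpha}$, so the integral is $\sim h^{1+1/\alpha}e^{2(s_*-1/d)/h}$ and the prefactor becomes $h^{-3+1/\alpha}$, which is (B$'$).

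The main obstacle, and the whole point of the argument, is the control of the remainder $k^2\|\mathbf{w}_{d,h}\|_{L^2(\Omega)}^2$, where $\mathbf{w}_{d,h}=\mathbf{u}_{d,h}-\mathbf{u}_{0,d,h}$ solves $\nabla\cdot\sigma_0(\mathbf{w}_{d,h})+k^2\mathbf{w}_{d,h}=-\nabla\cdot\sigma_D(\mathbf{u}_{d,h})$ with zero boundary data. The naive elliptic bound $\|\mathbf{w}_{d,h}\|_{H^1}\le C\|\nabla\mathbf{u}_{d,h}\|_{L^2(D)}$ is both circular (after inserting $\mathbf{u}_{d,h}=\mathbf{u}_{0,d,h}+\mathbf{w}_{d,h}$) and, worse, of the same exponential and polynomial order as the main term, so subtracting $k^2\|\mathbf{w}_{d,h}\|^2$ in (\ref{pf.L}) could wipe out the lower bound; this is exactly why earlier works imposed $\partial D\in C^2$. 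Following Sini--Yoshida, the plan is to estimate $\mathbf{w}_{d,h}$ in the weaker $L^2$ rather than $H^1$ norm and to exploit the oscillation of the phase $\rho/h$ to gain a power of $h$ over the naive bound, making $k^2\|\mathbf{w}_{d,h}\|_{L^2}^2=o(\text{main term})$ uniformly as $h\to0^+$. Since the main term and the remainder concentrate at the same $\mathbf{y}_0$ with the same exponential rate $e^{2(s_*-1/d)/h}$, it is precisely this extra power of $h$ --- not any smallness coming from the geometry --- that closes the estimate, and it is what allows the hypotheses on $\partial D$ to be relaxed all the way to continuity.
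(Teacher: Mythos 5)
Your overall architecture matches the paper's: part (A) from the two--sided bounds (\ref{pf.U})--(\ref{pf.L}) together with Lemma \ref{lem3.1}, the replacement of $\bfu_{0,d,h}$ by the explicit $\bfp_{d,h}$ up to the exponentially smaller error $\bfw_h$, the localization at a touching point of $\ell_{1/s_*}$ with $\partial D$ via the jump condition (\ref{JC}) (with case (ii) handled through $-E$ and (\ref{pf.U})), and the Laplace--integral lower bounds that produce the $\delta$--loss in (B) and the factor $h^{1+\frac{1}{\alpha}}$ in (B$'$). Those portions are consistent with the paper's Lemma \ref{est-data} and the computations of $A_{q,*,h}$ and $B_{q,*,h}$.

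However, at the step you yourself call ``the whole point of the argument'' --- showing that $k^2\|\bfw_{d,h}\|_{L^2(\Omega)}^2$ is $o$ of the main term --- your proposal is a placeholder, and the mechanism you invoke is not the right one. There is no oscillatory cancellation to exploit: the source $\bfg_h$ and the data near the touching point are governed by the positive real weight $e^{\tau/h}$, and $\|\bfw_{d,h}\|_{L^2}$ sees only its magnitude. The paper's gain of a power of $h$ comes from two specific ingredients. First, Lemma \ref{(w)} (the Sini--Yoshida estimate (\ref{syest})), proved by duality against the adjoint solution $\bfq$ combined with Meyers' $L^p$ gradient estimate, gives $\|\bfw_{d,h}\|_{L^2(\Omega)}\le C\|\nabla\bfu_{0,d,h}\|_{L^{q}(D)}$ for some exponent $q$ \emph{strictly less than} $2$. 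Second, the elementary H\"older comparison
\[
\left(B_{q,*,h}\right)^{2/q}\le C\left(\tfrac{h}{qb}\right)^{2/q}\left(\tfrac{2b}{h}\right)B_{2,*,h}=Ch^{\frac{2}{q}-1}B_{2,*,h}
\]
between the $L^q$ and $L^2$ masses of the concentrated CGO gradient converts the strict inequality $q<2$ into the vanishing factor $h^{\frac{2}{q}-1}$ in (\ref{hard2}). With only the $q=2$ bound, the remainder is of exactly the same order $h^{-4}e^{\frac{2}{h}(s_*-\frac{1}{d})}B_{2,*,h}$ as the main term, and nothing forces the bracket in (\ref{Elower}) to stay positive; so both the existence of some admissible $q<2$ (Meyers) and the $L^q$--versus--$L^2$ comparison of the Laplace integrals are indispensable and must be supplied, not gestured at. The remaining checks you flag (nonvanishing of the leading traceless--strain coefficient, the measure bound $|\{\tau>s_*-\eta\}\cap D\cap B_r|\gtrsim\eta^{1+\frac{1}{\alpha}}$ for (B$'$)) do go through as in the paper.
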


Before going into the proof of Theorem \ref{thm4.1}, we give two remarks.

\begin{rmk}\label{rmk4.2}\textup{}
\begin{enumerate}
\item
From Theorem \ref{thm4.1}, we have the following conclusions.
In (A), since $s_d<\frac{1}{d}$, $|E(\bff_{d,h})|$ tends to zero as $h$ tends to zero.
On the other hand, in (B) and in (B$'$), since $s_*\geq\frac{1}{d}$, $|E(\bff_{d,h})|$ tends to infinity as $h$ tends to zero.
In particular, from (A) and (B), we have 
\begin{align}\label{td}
s_* = \inf\left\{\frac{1}{d}:\lim_{h\rightarrow 0^+}|E(\bff_{d,h})|=0\right\}.
\end{align} 
Hence, although we don't know the limiting behavior of $E(\bff_{d,h})$ when $\Gamma_d$ just touches $\partial D$, we can reconstruct $\partial D$ in principle. (Of course, due to the geometric nature of $\Gamma_d$, in fact only ``detectable'' points can be reconstructed. An explanation of this point can be found in \cite[Corollary 5.4]{UW2008}. Also see \cite{UWW2009} or \cite{NUW2011} for a reconstruction algorithm, which is easily modified to be suited for our case. We omit such discussions in this paper.) From this point of view, almost no regularity assumption on $\partial D$ is essential in the reconstruction. Nevertheless, for a complete characterization of the limiting behavior of $E(\bff_{d,h})$, we include (B$'$) in our theorem, while for this purpose more regularity assumption has to be made.
\item
We will use (\ref{pf.U}) and (\ref{pf.L}) to prove Theorem \ref{thm4.1}. Roughly speaking we have better knowledge of ${\bfu}_{0,d,h}$ than $\bfw_{d,h}$, and the crucial step is to give an appropriate control of $\|\bfw_{d,h}\|_{L^2(\Omega)}$ in terms of ${\bfu}_{0,d,h}$. For this purpose, in the corresponding parts of early researches, e.g. \cite{I1999,NY2007,NUW2011}, some technical assumptions (precisely, positivity of the relative curvature and finiteness of the number of touching points of $\ell_{1/s_*}$ (or say $\Gamma_{1/s_*}$) and 
$\partial D$) have to be made. In particular $\partial D$ is usually assumed to be $\mathcal{C}^2$. (In order to apply CGO solutions with complex polynomial phases, even more technicalities are involved. For example, in 
\cite[Lemma 3.7]{NUW2011}, the authors proposed an estimate which is based on a rather technical result in \cite{LN2003}.) In \cite{SYpreprint}, Sini and Yoshida came up with a totally different method to control $\|\bfw_{d,h}\|_{L^2(\Omega)}$ (while they did not adopt CGO solutions with complex polynomial phases). Precisely, they proposed (in our terminology)
\begin{equation}\label{syest}
\|\bfw_{d,h}\|_{L^2(\Omega)} \le C \|{\bfu}_{0,d,h}\|_{W^{1,p}(D)}
\end{equation}
for some $p<2$, which was proved by using an $L^p$ regularity estimate of Meyers and the Friedrichs' inequality. In this way the technical assumptions on the touching point are no more needed and $\partial D$ can be assumed to be only Lipschitz. Inspired by this result, we tried to adopt their idea in our situation. We find it's interesting that, with more careful treatment, we find the boundaries of the inclusions can in fact be assumed to be only continuous. Moreover, we find in the case of $\Gamma_d$ just touching $\partial D$, the regularity assumption on $\partial D$ can be reduced to be $C^{0,\alpha}$ for any $\alpha\in(\frac{1}{3},1]$. 

\end{enumerate}
\end{rmk}

To save notation, in the remaining of this paper we will freely use $C$ to denote a constant, which may represent different values at different places. 

The following lemma is just (\ref{syest}), we give the proof here for the sake of completeness.
\begin{lem}\label{(w)}
There exist constants $C>0$ and $1 \le q_0 < 2$ such that for $q_0 < q \le 2$,
\[
\|\bfw\|_{L^2(\Omega)} \le C \|\nabla\bfu_0\|_{L^{q}(D)},
\] 
whenever $\bfu$ and $\bfu_0 \in H^1(\Omega)^2$ satisfy \textup{(\ref{tildeE})} and \textup{(\ref{E0})} respectively, $\bfu$ and $\bfu_0$ have the same traces on $\partial\Omega$, and $\bfw = \bfu-\bfu_0$.
\end{lem}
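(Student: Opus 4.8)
The plan is to establish the estimate $\|\bfw\|_{L^2(\Omega)} \le C\|\nabla\bfu_0\|_{L^q(D)}$ by combining a Meyers-type $L^p$ regularity estimate with the Friedrichs (Poincar\'{e}) inequality, following the idea attributed to Sini--Yoshida in (\ref{syest}). The starting point is the boundary value problem satisfied by $\bfw = \bfu - \bfu_0$. Since $\bfu$ solves (\ref{tildeE}) and $\bfu_0$ solves (\ref{E0}), subtracting gives an equation for $\bfw$ with $\bfw = 0$ on $\partial\Omega$; the source term will involve $\sigma_D(\bfu_0) = \sigma(\bfu_0) - \sigma_0(\bfu_0)$, which is supported in $\bar D$ because $\lambda_D = \mu_D = 0$ off $D$. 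The first step, therefore, is to derive the clean weak formulation
\[
\int_\Omega \bigl[\sigma(\bfw) : \nabla\bar{\boldsymbol\varphi} - k^2\,\bfw\cdot\bar{\boldsymbol\varphi}\bigr]\,dx = -\int_D \sigma_D(\bfu_0):\nabla\bar{\boldsymbol\varphi}\,dx
\]
for all test fields $\boldsymbol\varphi \in H^1_0(\Omega)^2$, exhibiting $\bfw$ as the solution of a strongly elliptic system with divergence-form right-hand side $\nabla\cdot(\sigma_D(\bfu_0)\chi_D)$ whose coefficient is controlled by $\|\nabla\bfu_0\|$ on $D$.

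\emph{Second}, I would invoke the Meyers higher-integrability theorem: for the strongly elliptic operator $\nabla\cdot\sigma$ (guaranteed by (\ref{elliptic})), there is an exponent $p_* > 2$, depending only on the ellipticity constants and $\Omega$, such that the solution operator sending a divergence-form datum in $L^{p}$ to the $W^{1,p}$ solution is bounded for $2 \le p < p_*$. By duality, the solution operator for data in $W^{-1,q}$ maps into $L^q$ (and in fact into the dual Sobolev scale) for the conjugate range $q_0 < q \le 2$ with $q_0 = p_*' < 2$. Applying this to the system for $\bfw$ yields $\|\bfw\|_{W^{1,q}(\Omega)} \le C\|\sigma_D(\bfu_0)\chi_D\|_{L^q(\Omega)} \le C\|\nabla\bfu_0\|_{L^q(D)}$, where the last inequality uses $\lambda_D,\mu_D \in L^\infty$ together with the pointwise bound $|\sigma_D(\bfv)| \le C(|\nabla\cdot\bfv| + |\epsilon(\bfv)|) \le C|\nabla\bfv|$. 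The one subtlety requiring care is that $k^2$ may not render $-\nabla\cdot\sigma$ coercive; since $k^2$ is chosen to avoid Dirichlet eigenvalues the operator $-\nabla\cdot\sigma - k^2$ is still an isomorphism on $H^1_0$, and I would absorb the zeroth-order term by a standard perturbation argument (writing $\bfw$ as the coercive part plus a compact correction, or treating $k^2\bfw$ as an additional $L^q$ datum bootstrapped from the $H^1$ bound), so that the Meyers exponent for the full operator survives.

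\emph{Third}, I would pass from the $W^{1,q}$ bound to the desired $L^2$ bound. Since $\bfw \in H^1_0(\Omega)^2$, the Friedrichs inequality gives $\|\bfw\|_{L^2(\Omega)} \le C\|\nabla\bfw\|_{L^2(\Omega)}$; but I want $L^2$ on the left controlled by $L^q$ with $q < 2$ on the right, so the more direct route is Sobolev embedding: for $q$ close enough to $2$ (and in dimension two) the embedding $W^{1,q}(\Omega) \hookrightarrow L^2(\Omega)$ holds, giving $\|\bfw\|_{L^2(\Omega)} \le C\|\bfw\|_{W^{1,q}(\Omega)} \le C\|\nabla\bfu_0\|_{L^q(D)}$ directly. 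In two dimensions $W^{1,q} \hookrightarrow L^2$ requires only $q \ge 1$, so this embedding is harmless across the whole admissible range, and choosing $q_0 = \max(1, p_*')$ completes the chain.

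\emph{The main obstacle} I anticipate is the second step: making the Meyers estimate rigorous for the \emph{system} $\nabla\cdot\sigma$ rather than a scalar equation, and doing so uniformly while handling the non-coercive zeroth-order term $k^2\bfw$. Meyers' theorem is classically stated for scalar divergence-form equations, so I would need either a vectorial version valid under the strong ellipticity (Legendre--Hadamard / strong convexity) hypothesis in (\ref{elliptic}), or to reduce to the scalar case via the Helmholtz decomposition already used in Section 3. The self-improving integrability exponent $p_*$ must be shown to depend only on the fixed data $(\lambda_0,\mu_0,\lambda,\mu,\Omega)$ and not on $\bfu_0$, which is what makes $q_0$ a universal constant in the statement; once that uniformity is secured, the rest is a routine assembly of duality, the $L^\infty$ bound on $\lambda_D,\mu_D$, and the two-dimensional Sobolev embedding.
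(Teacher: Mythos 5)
Your proposal reaches the right conclusion but by a genuinely different route from the paper, and one step of it, as written, does not close. The paper does not apply Meyers' estimate to the equation for $\bfw$ at all. Instead it introduces an auxiliary function $\bfq\in H^1_0(\Omega)^2$ solving $\nabla\cdot\sigma(\bfq)+k^2\bfq=\bar{\bfw}$, derives the duality identity $\int_\Omega|\bfw|^2\,dx=\int_\Omega tr(\sigma_D(\bfu_0)\nabla\bfq)\,dx$ by two integrations by parts, and then applies H\"{o}lder with $\tfrac{1}{p}+\tfrac{1}{q}=1$ together with Meyers' estimate \emph{above} the energy exponent, namely $\|\nabla\bfq\|_{L^p(\Omega)}\le C(\|\bfq\|_{L^2(\Omega)}+\|\bfw\|_{L^2(\Omega)})\le C\|\bfw\|_{L^2(\Omega)}$ for $2\le p<p_0$, to obtain $\|\bfw\|_{L^2(\Omega)}^2\le C\|\nabla\bfu_0\|_{L^q(D)}\|\bfw\|_{L^2(\Omega)}$. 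The point of this detour is that the $\bfq$-problem has an $L^2$ right-hand side, so its unique solvability in $H^1_0(\Omega)^2$ and the bound $\|\bfq\|_{L^2(\Omega)}\le C\|\bfw\|_{L^2(\Omega)}$ are already available from the standing assumption that $k^2$ is not a Dirichlet eigenvalue; only the $p>2$ half of the Meyers estimate is needed, and no solvability theory below the energy space is required.

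Your route instead asks for the a priori estimate $\|\bfw\|_{W^{1,q}(\Omega)}\le C\|\sigma_D(\bfu_0)\|_{L^q(D)}$ with $q<2$ directly, followed by the two-dimensional embedding $W^{1,q}\hookrightarrow L^2$. That can be made rigorous, but the treatment of the zeroth-order term is a genuine gap. Your first suggested fix --- treating $k^2\bfw$ as an additional datum ``bootstrapped from the $H^1$ bound'' --- is circular: the only a priori control on $\|\bfw\|_{L^2(\Omega)}$ available at that stage passes through $\|\nabla\bfu_0\|_{L^2(D)}$, which is exactly the $q=2$ estimate you are trying to improve, so feeding $k^2\bfw$ back in as a datum yields $\|\bfw\|_{W^{1,q}}\le C\|\sigma_D(\bfu_0)\|_{L^q}+C'\|\bfw\|_{W^{1,q}}$ with no smallness of $C'$. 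Your second fix --- Fredholm theory, viewing the zeroth-order term as a compact perturbation --- does work, but it requires establishing that $\nabla\cdot\sigma+k^2:W^{1,q}_0\to W^{-1,q}$ is an isomorphism for all $q$ in a Meyers neighbourhood of $2$, including injectivity (a $W^{1,q}$ solution of the homogeneous problem self-improves to $H^1$ by Meyers and then vanishes because $k^2$ is not an eigenvalue); none of this is carried out in your sketch, and it is precisely the machinery the paper's duality identity is designed to avoid. If you adopt the paper's auxiliary-function argument, the Sobolev embedding step also becomes unnecessary.
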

\begin{proof}
Let $\bfq$ be the element in $H^1_0(\Omega)^2$ satisfying
\begin{align*}
\nabla\cdot(\sigma(\bfq))+k^2\bfq=\bar{\bfw} \quad\mbox{in }\Omega.
\end{align*}
Then, by taking inner product with $\bfw$ and integration by parts, we have
\begin{align}
\int_{\Omega}|\bfw|^2dx
&=-\int_{\Omega}tr(\sigma(\bfq)\nabla\bfw)dx
+k^2\int_{\Omega}\bfq\cdot\bfw dx \notag \\
&=-\int_{\Omega}tr(\sigma(\bfw)\nabla\bfq)dx
+k^2\int_{\Omega}\bfq\cdot\bfw dx. \label{ipwithw}
\end{align}
On the other hand, note that
\[
\nabla\cdot\sigma(\bfw)+k^2\bfw = -\nabla\cdot\sigma_D(\bfu_0),
\]
which, by taking inner product with $\bfq$ and integration by parts, gives
\begin{align}\label{ipwithq}
-\int_{\Omega} tr(\sigma(\bfw)\nabla\bfq) dx 
+ k^2\int_{\Omega} \bfw\cdot\bfq dx
= \int_{\Omega} tr(\sigma_D(\bfu_0)\nabla\bfq) dx.
\end{align}
From (\ref{ipwithw}) and (\ref{ipwithq}) we get
\[
\int_{\Omega}|\bfw|^2dx = \int_{\Omega} tr(\sigma_D(\bfu_0)\nabla\bfq) dx.
\]
Then by H\"{o}lder's inequality we have for any $1 \le p \le \infty$
\begin{align}\label{w2}
\int_{\Omega} |\bfw|^2 dx \le \|\sigma_D(\bfu_0)\|_{L^{q}(D)}\|\nabla\bfq\|_{L^p(\Omega)},
\end{align}
where $q$ is the conjugate exponent of $p$.

Now let $\bfQ=\bfq$. By definition of $\bfq$ we have
\[
\left\{
\begin{aligned}
&\nabla\cdot(\sigma(\bfQ))=\bar{\bfw}-k^2\bfq\quad\mbox{in }\Omega\\
&\bfQ=0\quad\mbox{on }\Omega.
\end{aligned}\right.
\]
Then by \cite[Theorem 1]{M1963}, there exist $p_0>2$ such that for each $2 \le p < p_0$,
\begin{align}\label{lpest}
\|\nabla\bfq\|_{L^p(\Omega)}=\|\nabla\bfQ\|_{L^p(\Omega)}
\le C\left\{\|\bfq\|_{L^2(\Omega)}+\|\bfw\|_{L^2(\Omega)}\right\}
\end{align}
for some $C = C(k,\lambda,\mu) > 0$. Note that also by definition of $\bfq$, we have 
$\|\bfq\|_{L^2(\Omega)} \le C\|\bfw\|_{L^2(\Omega)}$ for some 
$C = C(k,\lambda,\mu) > 0$ (see e.g. \cite[Section 6.2, Theorem 6]{E1998book}). So from (\ref{lpest}) we have
\begin{align}\label{LpQ}
\|\nabla\bfq\|_{L^p(\Omega)} \le C\|\bfw\|_{L^2(\Omega)}
\end{align}
for some $C = C(k,\lambda,\mu) > 0$. Combining (\ref{w2}) and (\ref{LpQ}), we have
\[
\|\bfw\|_{L^2(\Omega)}^2 \le C\|\nabla\bfu_0\|_{L^{q}(D)^2}\|\bfw\|_{L^2(\Omega)}
\]
for some $C = C(k,\lambda,\mu) > 0$ and $2 \le p < p_0$, and therefore
\[
\|\bfw\|_{L^2(\Omega)} \le C \|\nabla\bfu_0\|_{L^{q}(D)}
\]
for some $C = C(k,\lambda,\mu) > 0$ and $q_0 < q \le 2$, where $1 \le q_0 < 2$ is the conjugate exponent of $p_0$.
\end{proof}

\begin{rmk}
Remember that we assume $\Omega$ has a smooth boundary for simplicity. In fact, 
it is so assumed only to allow direct application of the $L^p$ estimates in \cite{M1963}. In other words, the regularity condition on $\partial\Omega$ really required is just that guarantees the validity of (\ref{lpest}).
\end{rmk}

To make the proof of Theorem \ref{thm4.1} more concise, some computational results are collected in the following lemma. 
\begin{lem}\label{est-data}For $d \in J$, we have the following conclusions.
\begin{itemize}
\item[\textup{(i)}]
There exsists a constant $C>0$ such that for $q>0$ and 
$0 < h \le 1$, we have
\begin{align}\label{gradp}
\|\nabla\bfp_{d,h}\|_{L^q(D)}
\leq Ch^{-2}\left(
\int_{D\cap\Gamma_{d+2\ve}}e^{\frac{q}{h}(\tau(\bfx)-\frac{1}{d})}dx
\right)^\frac{1}{q}.
\end{align}
In particular, since $s_*\ge\tau(\bfx)$ for $\bfx\in{D\cap\Gamma_{d+2\ve}}$, we have
\[
\left\|\nabla\bfp_{d,h}\right\|_{L^q(D)}
\leq Ch^{-2}e^{\frac{1}{h}(s_*-\frac{1}{d})}
\]
for some $C>0$ independent of $h$.
\item[\textup{(ii)}]
There exist positive constants $c$ and $C$ such that, for 
$0 < h \le 1$ and for any open set $U$ with 
$D\cap\Gamma_{d+\ve}\cap U\ne\emptyset$, we have
\begin{align}\label{cC}
\begin{aligned}
&\left\|
\epsilon(\bfp_{d,h})-\frac{1}{2}(\nabla\cdot\bfp_{d,h})I_2
\right\|^2_{L^2(D\cap\Gamma_{d+\ve}\cap U)}\\
&\qquad\quad\geq (ch^{-4}-Ch^{-2})
\int_{D \cap \Gamma_{d+\ve} \cap U} e^{\frac{2}{h}(\tau(\mathbf{x})-\frac{1}{d})}dx.
\end{aligned}
\end{align}
\item[\textup{(iii)}] 
There exist constants $C>0$ and $q_0<2$ such that for each 
$q_0 < q \le 2$ and $0 < h \ll 1$, we have
\begin{align}\label{wn}
\|\bfw_{d,h}\|^2_{L^2(\Omega)}
\le Ce^{\frac{2}{h}(\frac{1}{d+\ve}-\frac{1}{d})}
+Ch^{-4}\left(\int_{D\cap\Gamma_{d+2\ve}}e^{\frac{q}{h}(\tau(\bfx)-\frac{1}{d})}dx\right)^{2/q}.
\end{align}
\end{itemize}
\end{lem}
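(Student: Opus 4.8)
The plan is to prove the three bounds separately, exploiting the explicit form $\bfp_{d,h}=\phi_d e^{-1/(hd)}\bfv_h$ together with the amplitude estimates (\ref{R.3}), (\ref{R.7}) and the remainder estimates (\ref{R.6}). Two facts will be used repeatedly: first, on $\Gamma_{d+2\ve}\cap\Omega$ (hence also on $\Gamma_{d+\ve}\cap\Omega$) we have $\tau\ge 1/(d+2\ve)>0$, so $1/\tau$ is bounded; second, since $\bf 0\notin\bar\Omega$, the quantity $|\bfx|$ is bounded above and below on $\Omega$. Finally, $|e^{-1/(hd)}e^{\rho/h}|^2=e^{\frac{2}{h}(\tau-\frac{1}{d})}$ is the source of the exponential weight in all three estimates.

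For (i), I would differentiate $\bfp_{d,h}$ by the product rule. Only $D\cap\Gamma_{d+2\ve}$ contributes, since $\phi_d$ is supported there. The cutoff term $\nabla\phi_d\otimes\bfv_h$ is controlled by cutoff property (iii) together with (\ref{R.3}), while $\phi_d\nabla\bfv_h$ is handled by (\ref{R.3}) and (\ref{R.7}); the dominant contribution $h^{-1}\nabla\rho\cdot\bfv_h$ is of order $h^{-2}$. This gives the pointwise bound $|\nabla\bfp_{d,h}(\bfx)|\le Ch^{-2}e^{\frac{1}{h}(\tau(\bfx)-\frac{1}{d})}$ on $D\cap\Gamma_{d+2\ve}$; raising to the $q$-th power, integrating, and taking the $q$-th root yields (\ref{gradp}). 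The ``in particular'' is immediate from $\tau\le s_*$ on $D\cap\Gamma$.

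For (iii), the observation is that $\bfw_{d,h}=\bfu_{d,h}-\bfu_{0,d,h}$ is precisely the difference governed by Lemma \ref{(w)}, so $\|\bfw_{d,h}\|_{L^2(\Omega)}\le C\|\nabla\bfu_{0,d,h}\|_{L^q(D)}$ for $q_0<q\le 2$. I would then write $\bfu_{0,d,h}=\bfp_{d,h}-\bfw_h$ with $\bfw_h$ as in (\ref{wh}), giving $\|\nabla\bfu_{0,d,h}\|_{L^q(D)}\le\|\nabla\bfp_{d,h}\|_{L^q(D)}+\|\nabla\bfw_h\|_{L^q(D)}$. The first term is bounded by part (i); the second, using $q\le 2$ and H\"older on the bounded set $D$, is dominated by $C\|\bfw_h\|_{H^1(\Omega)}$, which Lemma \ref{lem3.1} bounds by $Ce^{-\frac{1}{h}(\frac{1}{d}-\frac{1}{d+\ve})}$. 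Squaring and using $(a+b)^2\le 2a^2+2b^2$ produces (\ref{wn}).

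The lower bound (ii) is the delicate part and the main obstacle, since extracting a \emph{lower} bound forces one to identify the leading term precisely and to verify it is not annihilated by the deviatoric projection. On $\Gamma_{d+\ve}$ we have $\phi_d\equiv 1$, so $\bfp_{d,h}=e^{-1/(hd)}\bfv_h$ there and no cutoff derivatives appear. Writing $\nabla\rho=\partial_1\rho\,(1,i)^T$ and reading the $O(h^{-2})$ part of $\nabla\bfp_{d,h}$ off (\ref{R.1h})--(\ref{R.2h}), with the remainder $O(h^{-1})$ by (\ref{R.6}), one finds
\[
\nabla\bfp_{d,h}=e^{-\frac{1}{hd}}e^{\frac{\rho}{h}}\Big(\tfrac{1}{h^2}(\partial_1\rho)^2(1-i)\,(\bfa\otimes\bfa)+\mathcal{R}\Big),\qquad \bfa=(1,i)^T.
\]
The crucial algebraic point is that $\bfa\otimes\bfa$ is symmetric and trace-free, as $a_1^2+a_2^2=1+i^2=0$, with $|\bfa\otimes\bfa|^2=4\neq 0$; hence the leading matrix already equals its own deviatoric part and survives subtraction of $\tfrac12(\nabla\cdot\bfp_{d,h})I_2$. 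Writing the deviatoric strain as $e^{-\frac{1}{hd}}e^{\frac{\rho}{h}}(X+Y)$ with $X$ the leading $O(h^{-2})$ term and $Y=O(h^{-1})$, I would apply $|X+Y|^2\ge\tfrac12|X|^2-2|Y|^2$ (Young's inequality rather than the naive reverse triangle inequality). Using $|\partial_1\rho|^4=N^4|\bfx|^{4N-4}$ bounded below on $\Omega$, the term $\tfrac12|X|^2$ contributes $ch^{-4}$ and $2|Y|^2$ contributes $-Ch^{-2}$ pointwise; multiplying by $e^{\frac{2}{h}(\tau-\frac1d)}$ and integrating over $D\cap\Gamma_{d+\ve}\cap U$ gives (\ref{cC}). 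The subtlety is exactly in isolating the leading matrix, checking its deviatoric non-degeneracy, and choosing Young's inequality so that the error is of the stated order $h^{-2}$ while a strictly positive $ch^{-4}$ remains.
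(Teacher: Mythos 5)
Your proposal is correct and follows essentially the same route as the paper: product rule plus the estimates (\ref{R.3})--(\ref{R.7}) for (i); isolation of the $O(h^{-2})$ leading term of $\nabla\bfp_{d,h}$ on $\Gamma_{d+\ve}$ (where $\phi_d\equiv 1$) and verification that its deviatoric part does not vanish for (ii), the paper extracting the single component $\partial_1 p^1_{d,h}-\partial_2 p^2_{d,h}$ rather than your rank-one matrix $\bfa\otimes\bfa$ but with identical content; and Lemma \ref{(w)} combined with the splitting $\bfu_{0,d,h}=\bfp_{d,h}-\bfw_h$, H\"older, and Lemma \ref{lem3.1} for (iii). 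Your use of $|X+Y|^2\ge\tfrac12|X|^2-2|Y|^2$ is in fact slightly more careful than the paper's $|A+B|^2\ge |A|^2-|B|^2$, which is not literally valid as stated but only affects the unspecified constants $c$ and $C$.
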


\begin{proof}
\begin{itemize}
\item[(i)]
Remember that 
\[
\bfp_{d,h}=(p^1_{d,h},p^2_{d,h})^T
=\phi_de^{\frac{1}{h}(\rho(\mathbf{x})-\frac{1}{d})}(Q_{h,1},Q_{h,2})^T,
\] 
where $Q_{h,1}$ and $Q_{h,2}$ are defined in (\ref{R.1h}) and (\ref{R.2h}) respectively. Then by definition of $\phi_d$ (in page \pageref{pdh}), for $\mathbf{x}\in D\setminus\Gamma_{d+2\ve}$, we have $\bfp_{d,h}(\bfx)=0$.
On the other hand, by (\ref{R.3}) and (\ref{R.7}), we have for $\bfx\in D\cap\Gamma_{d+2\ve}$ and $0<h\leq 1$,
\begin{align}
\begin{aligned}
|\nabla\bfp_{d,h}(\bfx)|^2
&=\sum_{j,l=1,2} \left|\frac{\partial p^j_{d,h}}{\partial x_l}\right|^2\\
&=\sum_{j,l=1,2}e^{\frac{2}{h}(\tau(\bfx)-\frac{1}{d})}
\left|\frac{\partial\phi_d(\bfx)}{\partial x_l}Q_{h,j}(\bfx)\right.\\
&\qquad\left.+\phi_d(\bfx)\left[\frac{1}{h}\frac{\partial\rho(\bfx)}{\partial x_l}Q_{h,j}(\bfx)+\frac{\partial Q_{h,j}(\bfx)}{\partial x_l}\right]\right|^2\\
&\leq Ce^{\frac{2}{h}(\tau(\bfx)-\frac{1}{d})}h^{-4},
\end{aligned}
\label{nablaV}\end{align} 
for some positive constant $C$ independent of $h$.
Since $s_*=\sup\limits_{\bfx\in D\cap\Gamma}\tau(\bfx)$ and 
$|\nabla\bfp_{d,h}|^{q}=(|\nabla\bfp_{d,h}|^2)^{q/2}$, we have for $0<h\leq 1$ 
\begin{align*}
\|\nabla\bfp_{d,h}\|_{L^q(D)}\leq Ch^{-2}
\left(\int_{D\cap\Gamma_{d+2\ve}}e^{\frac{q}{h}(\tau(\bfx)-\frac{1}{d})}dx\right)
^{\frac{1}{q}}\leq C e^{\frac{1}{h}(s_*-\frac{1}{d})}h^{-2},
\end{align*}
for some positive constant $C$ independent of $h$.

\item[(ii)] 
We can compute $\frac{\partial p^j_{d,h}}{\partial x_l}$ directly by $(\ref{R.1h})$ and (\ref{R.2h}) for $\bfx\in D\cap\Gamma_{d+2\ve}$:
\[
\begin{aligned}
\frac{\partial p^1_{d,h}(\bfx)}{\partial x_l}
&=e^{-\frac{1}{hd}}e^{\frac{\rho(\bfx)}{h}}
\left(\frac{\partial\phi_d(\bfx)}{\partial x_l}Q_{h,1}
+\frac{1}{h}\phi_d\frac{\partial\rho(\bfx)}{\partial x_l}Q_{h,1}
+\phi_d\frac{\partial Q_{h,1}(\bfx)}{\partial x_l}\right)\\
&=e^{-\frac{1}{hd}}e^{\frac{\rho(\bfx)}{h}}
\left(\frac{1}{h^2}\left(\frac{\partial\rho(\bfx)}{\partial x_1}
-\frac{\partial\rho(\bfx)}{\partial x_2}\right)
\frac{\partial\rho(\bfx)}{\partial x_l}\phi_d(\bfx)
+I_{h^{-1}}\right)
\end{aligned}
\]
and
\[
\begin{aligned}
\frac{\partial p^2_{d,h}(\bfx)}{\partial x_l}
&=e^{-\frac{1}{hd}}e^{\frac{\rho(\bfx)}{h}}
\left(\frac{\partial\phi_d(\bfx)}{\partial x_l}Q_{h,2}
+\frac{1}{h}\phi_d\frac{\partial\rho(\bfx)}{\partial x_l}Q_{h,2}
+\phi_d\frac{\partial Q_{h,2}(\bfx)}{\partial x_l}\right)\\
&=e^{-\frac{1}{hd}}e^{\frac{\rho(\bfx)}{h}}
\left(\frac{1}{h^2}\left(\frac{\partial\rho(\bfx)}{\partial x_1}
+\frac{\partial\rho(\bfx)}{\partial x_2}\right)
\frac{\partial\rho(\bfx)}{\partial x_l}\phi_d(\bfx)
+I'_{h^{-1}}\right),
\end{aligned}
\]
where
\[
\begin{aligned}
I_{h^{-1}}
&=\frac{1}{h^2}\phi_d\frac{\partial\rho}{\partial x_l}
\left(\frac{\partial\rho}{\partial x_1}R_{h,1}
-\frac{\partial\rho}{\partial x_2}R_{h,2}\right)\\
&\quad+\frac{1}{h}\frac{\partial\phi_d}{\partial x_l}
\left[\frac{\partial\rho}{\partial x_1}(1+R_{h,1})-\frac{\partial\rho}{\partial x_2}(1+R_{h,2})\right]\\
&\quad+\frac{1}{h}\phi_d
\left[\frac{\partial\rho}{\partial x_l}\left(\frac{\partial R_{h,1}}{\partial x_1}-\frac{\partial R_{h,2}}{\partial x_2}\right)
+\frac{\partial\rho}{\partial x_1}\frac{\partial R_{h,1}}{\partial x_l}-\frac{\partial\rho}{\partial x_2}\frac{\partial R_{h,2}}{\partial x_l}\right]\\
&\quad+\frac{1}{h}\phi_d\left[\frac{\partial^2\rho}{\partial x_l\partial x_1}(1+R_{h,1})-\frac{\partial^2\rho}{\partial x_l\partial x_2}(1+R_{h,2})\right]\\
&\quad+\phi_d\left(\frac{\partial^2R_{h,1}}{\partial x_l\partial x_1}-\frac{\partial^2 R_{h,2}}{\partial x_l\partial x_2}\right)
+\frac{\partial\phi_d}{\partial x_l}\left(\frac{\partial R_{h,1}}{\partial x_1}-\frac{\partial R_{h,2}}{\partial x_2}\right);
\end{aligned}
\]
\[
\begin{aligned}
I'_{h^{-1}}&=\frac{1}{h^2}\phi_d
\frac{\partial\rho}{\partial x_l}
\left(\frac{\partial\rho}{\partial x_2}R_{h,1}
+\frac{\partial\rho}{\partial x_1}R_{h,2}\right)\\
&\quad+\frac{1}{h}\frac{\partial\phi_d}{\partial x_l}
\left[\frac{\partial\rho}{\partial x_2}(1+R_{h,1})+\frac{\partial\rho}{\partial x_1}(1+R_{h,2})\right]\\
&\quad+\frac{1}{h}\phi_d
\left[\frac{\partial\rho}{\partial x_l}
\left(\frac{\partial R_{h,1}}{\partial x_2}
+\frac{\partial R_{h,2}}{\partial x_1}\right)
+\frac{\partial\rho}{\partial x_2}\frac{\partial R_{h,1}}{\partial x_l}+\frac{\partial\rho}{\partial x_1}\frac{\partial R_{h,2}}{\partial x_l}\right]\\
&\quad+\frac{1}{h}\phi_d
\left[\frac{\partial^2\rho}{\partial x_l\partial x_2}(1+R_{h,1})+\frac{\partial^2\rho}{\partial x_l\partial x_1}(1+R_{h,2})\right]\\
&\quad+\phi_d\left(\frac{\partial^2R_{h,1}}{\partial x_l\partial x_2}+\frac{\partial^2 R_{h,2}}{\partial x_l\partial x_1}\right)
+\frac{\partial\phi_d}{\partial x_l}\left(\frac{\partial R_{h,1}}{\partial x_2}+\frac{\partial R_{h,2}}{\partial x_1}\right).
\end{aligned}
\]

By (\ref{R.6}), for any $\bfx \in D \cap \Gamma_{d+2\ve}$ and $0 < h \le 1$,
\[
|I_{h^{-1}}(\bfx)|,|I'_{h^{-1}}(\bfx)|\leq Ch^{-1}
\]
for some positive constant $C$ independent of $h$.

Then we have for $\bfx \in D \cap \Gamma_{d+2\ve}$ and $0 < h \le 1$
\[
\begin{aligned}
2\bigg{|}\epsilon
&\left.(\bfp_{d,h})-\frac{1}{2}(\nabla\cdot\bfp_{d,h})I_2\right|^2\\
&\geq \left|\frac{\partial p^1_{d,h}}{\partial x_1}
-\frac{\partial p^2_{d,h}}{\partial x_2}\right|^2\\
&\geq\left|e^{\frac{1}{h}(\rho-\frac{1}{d})}\phi_dh^{-2}
\left[\frac{\partial\rho}{\partial x_1}
\left(\frac{\partial\rho}{\partial x_1}-\frac{\partial\rho}{\partial x_2}\right)
-\frac{\partial\rho}{\partial x_2}
\left(\frac{\partial\rho}{\partial x_2}+\frac{\partial\rho}{\partial x_1}\right)\right]\right|^2\\
&\quad\quad-\left|e^{\frac{1}{h}(\rho-\frac{1}{d})}\left(I_{h^{-1}}-I'_{h^{-1}}\right)\right|^2\\
&\geq e^{\frac{2}{h}(\tau-\frac{1}{d})}(c \phi_d^2 h^{-4}-2Ch^{-2})
\end{aligned}
\]
for some positive constants $c,C$ independent of $h$.
Then (ii) of this lemma is valid.

\item [(iii)]
By Lemma \ref{(w)}, there exist constants $C>0$ and $1 \le q_0 < 2$ such that 
\[
\|\bfw_{d,h}\|_{L^2(\Omega)}\leq C\|\nabla\bfu_{0,d,h}\|_{L^{q}(D)}
\]
for each $q_0 < q \le 2$.
Therefore replacing $\bfu_{0,d,h}$ by $\bfp_{d,h}-\bfw_h$ and applying H\"{o}lder's inequality, we have
\[
\begin{aligned}
\|\bfw_{d,h}\|_{L^2(\Omega)}
&\leq C\left\{\|\nabla\bfw_h\|_{L^{q}(D)}
+\|\nabla\mathbf{p}_{d,h}\|_{L^{q}(D)}\right\}\\
&\leq C \left\{\|\nabla\bfw_h\|_{L^{2}(D)}
+\|\nabla\mathbf{p}_{d,h}\|_{L^{q}(D)}\right\}\\
&\leq C\left\{\|\nabla\bfw_h\|_{H^1(D)}
+\|\nabla\mathbf{p}_{d,h}\|_{L^{q}(D)}\right\}.
\end{aligned}
\]
Then by Lemma \ref{lem3.1} and (\ref{gradp}), (\ref{wn}) follows.
\end{itemize}
\end{proof}
Now we give the proof of the main theorem.
\begin{proof}[Proof of Theorem \ref{thm4.1}]
\begin{itemize}
\item [(A)]
By (\ref{pf.L}), we have
\[
\begin{split}
-E(\bff_{d,h})&\leq \int_D\frac{-(\lambda_0+\mu_0)(\lambda_D+\mu_D)}{\lambda+2\mu}
|\nabla\cdot\bfu_{0,d,h}|^2dx\\
&\quad+2\int_D\frac{-\mu_0\mu_D}{\mu}
\left|\epsilon(\bfu_{0,d,h})-\frac{1}{2}(\nabla\cdot\bfu_{0,d,h})I_2\right|^2dx\\
&\qquad+\int_{\Omega}k^2|\bfw_{d,h}|^2dx.
\end{split}
\]
Therefore, together with (\ref{pf.U}), we have
\[
|E(\bff_{d,h})|\leq C\left\{\|\nabla\bfu_{0,d,h}\|^2_{L^2(D)}
+\|\bfw_{d,h}\|^2_{L^2(\Omega)}\right\}
\]
for some positive constant $C$ independent of $h$.
Therefore from Lemma \ref{lem3.1} and Lemma \ref{est-data}, we have, by choosing $q=2$, the following estimate:
\[
\begin{aligned}
|E(\bff_{d,h})|&\leq C\left\{\|\nabla\bfw_h\|^2_{L^2(\Omega)}
+\|\nabla\bfp_{d,h}\|^2_{L^2(D)}
+\|\bfw_{d,h}\|^2_{L^2(\Omega)}\right\}\\
&\leq C\left\{e^{\frac{2}{h}(\frac{1}{d+\ve}-\frac{1}{d})}
+h^{-4}e^{\frac{2}{h}(s_*-\frac{1}{d})}\right\}.
\end{aligned}
\]
Therefore for $0<h\leq 1$,
\[
|E(\bff_{d,h})|\leq C\left(h^{-4}e^{\frac{2}{h}(s_d-\frac{1}{d})}\right),
\]
where $s_d=\max(\frac{1}{d+\ve},s_*)$.
Moreover we notice that $\bar{D}\cap\Gamma_d=\emptyset$ implies $s_*<\frac{1}{d}$, and the conslusion (A) follows.

\item [(B)]
We first consider case (i) of (\ref{JC}) and prove the conclusion (B) from (\ref{pf.L}).

Suppose $D\cap\Gamma_d\neq\emptyset$, then $s_*>\frac{1}{d}\geq\frac{1}{d+\ve}$ since $D$ is open.
Therefore for any $\mathbf{y}\in\partial D\cap\Gamma_{d+\ve}$, each neighborhood $U_{\mathbf{y}}$ of $\mathbf{y}$ satisfies $D\cap\Gamma_{d+\ve}\cap U_{\mathbf{y}}\neq\emptyset$.
By the assumption (i) of (\ref{JC}), for each $\mathbf{y}\in \partial D$, there exists $r_{\mathbf{y}}$ such that
\begin{align}\label{ry}
\mu_D(\bfx)>r_{\mathbf{y}},\quad \lambda_D+\mu_D\geq 0,\quad \forall\bfx\in B_{r_{\mathbf{y}}}\cap D.
\end{align}

Set $K:=\partial D\cap\{\tau=s_*\}=\partial D\cap\ell_{1/s_*}$. It's easy to see that $K\neq\emptyset$.
Since $K$ is compact and is contained in 
$\cup_{y\in K}B_{r_{\mathbf{y}}}(\mathbf{y})$, there exsits $N\in\mathbb{N}$ such that $K\subset\cup_{j=1}^{N}B_{r_j}(\mathbf{y}_j)$, where $r_{\mathbf{y}_j}$ is abbreviated to $r_j$.
Let $D_R=D\setminus\cup_{j=1}^{N}B_{r_j}(\mathbf{y}_j)$, then it is easy to see that there exists $\delta '>0$ such that 
\[
\tau(\bfx)\leq s_*-\delta ' \quad\mbox{in}\quad D_R.
\] 
Therefore for $q_0<q\leq 2$ we have 
\[
\begin{split}
\int_{D\cap\Gamma_{d+2\ve}}e^{\frac{q}{h}(\tau-\frac{1}{d})}dx
&\leq\int_{D_R}e^{\frac{q}{h}(\tau-\frac{1}{d})}dx
+\sum_{j=1}^N\int_{B_{r_j}(\mathbf{y}_j)\cap D\cap\Gamma_{d+2\ve}}e^{\frac{q}{h}(\tau-\frac{1}{d})}dx\\
&\leq Ce^{\frac{q}{h}(s_*-\frac{1}{d}-\delta ')}
+N\int_{B_{r_*}(\mathbf{y}_*)\cap D\cap\Gamma_{d+2\ve}}e^{\frac{q}{h}(\tau-\frac{1}{d})}dx\\
\end{split}
\]  
for some $\mathbf{y}_*\in\{\mathbf{y}_j\}_{j=1}^N$ and $r_*\in\{r_j\}_{j=1}^N$ such that 
\[
\int_{B_{r_*}(\mathbf{y}_*)\cap D\cap\Gamma_{d+2\ve}}e^{\frac{q}{h}(\tau-\frac{1}{d})}dx
=\max_{j=1,\ldots,N}\left(\int_{B_{r_j}(\mathbf{y}_j)\cap D\cap\Gamma_{d+2\ve}}e^{\frac{q}{h}(\tau-\frac{1}{d})}dx\right).
\]
Moreover, we can compute more finely that
\[
\begin{split}
&\int_{B_{r_*}(\mathbf{y}_*)\cap D\cap\Gamma_{d+2\ve}}e^{\frac{q}{h}(\tau-\frac{1}{d})}dx\\
&=\int_{B_{r_*}(\mathbf{y}_*)\cap D\cap\Gamma_{d+\ve}}e^{\frac{q}{h}(\tau-\frac{1}{d})}dx
+\int_{B_{r_*}(\mathbf{y}_*)\cap D\cap(\Gamma_{d+2\ve}\setminus\Gamma_{d+\ve})}e^{\frac{q}{h}(\tau-\frac{1}{d})}dx\\
&\leq \int_{B_{r_*}(\mathbf{y}_*)\cap D\cap\Gamma_{d+\ve}}e^{\frac{q}{h}(\tau-\frac{1}{d})}dx  
+Ce^{\frac{q}{h}(\frac{1}{d+\ve}-\frac{1}{d})}.
\end{split}
\]
Therefore by combining the above inequalities, we have
\[
\begin{aligned}
\int_{D\cap\Gamma_{d+2\ve}}e^{\frac{q}{h}(\tau-\frac{1}{d})}dx
&\leq C \int_{B_{r_*}(\mathbf{y}_*)\cap D\cap\Gamma_{d+\ve}}e^{\frac{q}{h}(\tau-\frac{1}{d})}dx\\  
&\qquad+Ce^{\frac{q}{h}(\frac{1}{d+\ve}-\frac{1}{d})}
+Ce^{\frac{q}{h}(s_*-\frac{1}{d}-\delta ')}.
\end{aligned}
\]
Set 
\[
A_{q,*,h}:=\int_{B_{r_*}(\mathbf{y}_*)\cap D\cap\Gamma_{d+\ve}}e^{\frac{q}{h}(\tau-\frac{1}{d})}dx.
\]

Now we come back to (\ref{pf.L}), from Lemma \ref{lem3.1} we have for $0<h\ll 1$
\begin{align}\label{Elower}
\begin{split}
E(\bff_{d,h})&\geq C\left\{
\int_D\frac{\mu_0\mu_D}{\mu}\left|\epsilon(\bfp_{d,h})
-\frac{1}{2}(\nabla\cdot\bfp_{d,h})I_2\right|^2dx
-\|\bfw_h\|^2_{H^1(\Omega)}\right\}\\
&\quad\quad-k^2\|\bfw_{d,h}\|^2_{L^2(\Omega)}\\
&\geq C \left(\int_D\frac{\mu_0\mu_D}{\mu}\left|\epsilon(\bfp_{d,h})
-\frac{1}{2}(\nabla\cdot\bfp_{d,h})I_2\right|^2dx\right)\\
&\qquad\times\Bigg{(}1-\frac{e^{\frac{2}{h}(\frac{1}{d+\ve}-\frac{1}{d})}}
{\int_D\frac{\mu_0\mu_D}{\mu}\left|\epsilon(\bfp_{d,h})
-\frac{1}{2}(\nabla\cdot\bfp_{d,h})I_2\right|^2dx}\\
&\qquad\qquad\qquad-\frac{\|\bfw_{d,h}\|^2_{L^2(\Omega)}}
{\int_D\frac{\mu_0\mu_D}{\mu}\left|\epsilon(\bfp_{d,h})
-\frac{1}{2}(\nabla\cdot\bfp_{d,h})I_2\right|^2dx}\Bigg{)}.
\end{split}
\end{align}
In the following we estimate each term separately.

First, by Lemma \ref{est-data} we can compute
\begin{align}\label{w-est}
\begin{aligned}
&\frac{\int_{D}\frac{\mu_0\mu_D}{\mu}
\left|\epsilon(\bfp_{d,h})-\frac{1}{2}(\nabla\cdot\bfp_{d,h})I_2\right|^2dx}
{\|\bfw_{d,h}\|^2_{L^2(\Omega)}}\\
&\qquad\geq C \frac{A_{2,*,h}(c-Ch^{-2})}
{e^{\frac{2}{h}(\frac{1}{d+\ve}-\frac{1}{d})}
+h^{-4}\left(
\int_{D\cap\Gamma_{d+2\ve}}e^{\frac{q}{h}(\tau-\frac{1}{d})}dx\right)^{2/q}}\\
&\qquad\geq C \frac{A_{2,*,h}(ch^{-4}-Ch^2)}
{(A_{q,*,h})^{2/q}
+e^{\frac{2}{h}(\frac{1}{d+\ve}-\frac{1}{d})}
+e^{\frac{2}{h}(s_*-\frac{1}{d}-\delta')}},
\end{aligned}
\end{align}
for $q_0<q\leq 2$.

Now we need to compute $A_{q,*,h}$ carefully.
For $\mathbf{y}_*\in\ell_{1/s_*}\cap\partial D$, we consider the following change of coordinates as in \cite{NUW2011}.
First, let $\mathcal{T}$ be the composition of the following two rigid motions: i) translate $\mathbf{y}_*$ to the origin, and ii) rotate so that the unit inward normal of $\mathcal{T}(\Gamma_{1/s_*})$ at the origin is the vector $(0,1)^T$. Then set $\mathbf{z} = (z_1(\bfx),z_2(\bfx))^T = \mathcal{T}(\bfx)$  and $\boldsymbol{\xi}=(\xi_1(\mathbf{z}),\xi_2(\mathbf{z}))^T=\Xi(\mathbf{z})$, where
\[
\Xi(\mathbf{z})=
\left(\begin{array}{c}
z_1\\
\tau(\mathcal{T}^{-1}\mathbf{z})-s_*
\end{array}\right).
\]
Then $\Xi\circ\mathcal{T}$ gives a $\mathcal{C}^2$ diffeomorphism in a neighborhood $U_{\mathbf{y}_*}$ of $\mathbf{y}_*$. Geometrically, under the transformation $\Xi\circ\mathcal{T}$ the  point $\mathbf{y}_*$ becomes the origin of the new frame $(\xi_1,\xi_2)^T$, $\xi_1$-axis coincides with the curve $\ell_{1/s_*}$, and the positive direction of $\xi_2$-axis coincides with the unit inward normal of $\mathcal{T}(\Gamma_{1/s_*})$ at $\mathbf{y}_*$. 

We do the above change of coordinates, then we have $\Xi\circ\mathcal{T}(\mathbf{y}_*)=0$ and
\begin{align}\label{Aqh}
\begin{aligned}
&Ce^{\frac{q}{h}(s_*-\frac{1}{d})}
\left(\int_{\Xi\circ\mathcal{T}(B_{r_*}(\mathbf{y}_*)\cap D\cap\Gamma_{d+\ve})}e^{\frac{q}{h}\xi_2}d\boldsymbol{\xi}\right)\\
&\leq A_{q,*,h}
\leq Ce^{\frac{q}{h}(s_*-\frac{1}{d})}\
\left(\int_{\Xi\circ\mathcal{T}(B_{r_*}(\mathbf{y}_*)\cap D\cap\Gamma_{d+\ve})}e^{\frac{q}{h}\xi_2}d\boldsymbol{\xi}\right).
\end{aligned}
\end{align}
Since $\partial D$ is continuous, $\Xi\circ\mathcal{T}(\partial D)$ is also continuous and is able to be parametrized by a continuous function near $\boldsymbol{\xi}=\bf{0}$ under a suitable rotation.
So, we consider a rotation $\widetilde{\mathcal{T}}$ with $\widetilde{\mathcal{T}}(\boldsymbol{\xi})=\tilde{\boldsymbol{\xi}}=(\tilde{\xi}_1,\tilde{\xi}_2)^T$ such that $\widetilde{\mathcal{T}}(\Xi\circ\mathcal{T}(\partial D))$ can be parametrized by $f_*(\tilde{\xi}_1)$ near $\tilde{\boldsymbol{\xi}}=0$ with $f_*(\bf{0})=0$.

Actually, we can choose $\widetilde{\mathcal{T}}$ such that 
\[
\xi_2=(\sin\theta)\tilde{\xi}_1+(\cos\theta)\tilde{\xi}_2 \quad\mbox{with}\quad |\theta|<\frac{\pi}{2},
\]
because $\Xi\circ\mathcal{T}(D)\subset\{\xi_2\leq 0\}$ and $D$ is open.
Let $a=\sin\theta$ and $b=\cos\theta$, then $b>0$. 
Without loss of generality, we assume $\widetilde{\mathcal{T}}(\Xi\circ\mathcal{T}(\partial D))$ can be parametrized by $f_*(\tilde{\xi}_1)$ in 
$\tilde{\xi}_1<diam(\widetilde{\mathcal{T}}(\Xi\circ\mathcal{T}(B_{r_*}(\mathbf{y}_*)\cap D\cap\Gamma_{d+\ve})))$.
Set $\widetilde{U}=\widetilde{\mathcal{T}}(\Xi\circ\mathcal{T}(B_{r_*}(\mathbf{y}_*)\cap D\cap\Gamma_{d+\ve}))$.

Here we note that $f_*$ is continuous in $\widetilde{U}$ since we assume $\partial D$ has continuous boundary,
and $\widetilde{U}\subset\{a\tilde{\xi_1}+b\tilde{\xi_2}\leq 0\}$ since 
$\Xi\circ\mathcal{T}(D)\subset\{\xi_2\leq 0\}$.

Now it's easy to see that there exist positive constants $\delta_1,\delta_2,\delta_1',\delta_2'$ independent of $h$ with $\delta_2 < \delta_2'$ such that 
\[
\begin{split}
&\int_{-\delta_1'}^{\delta_1}\int_{-\delta_2}^{f_*(\tilde{\xi}_1)}
e^{\frac{q}{h}(a\tilde{\xi}_1+b\tilde{\xi}_2)}d\tilde{\boldsymbol{\xi}}\\
&\leq\int_{\Xi\circ\mathcal{T}(B_{r_*}(\mathbf{y}_*)\cap D\cap\Gamma_{d+\ve})}
e^{\frac{q}{h}\xi_2}d\boldsymbol{\xi}
=\int_{\widetilde{U}}e^{\frac{q}{h}(a\tilde{\xi_1}+b\tilde{\xi_2})}d\tilde{\boldsymbol{\xi}}\\
&\leq\int_{-\delta_1'}^{\delta_1}\int_{-\delta_2'}^{f_*(\tilde{\xi}_1)}
e^{\frac{q}{h}(a\tilde{\xi}_1+b\tilde{\xi}_2)}d\tilde{\boldsymbol{\xi}}\\
&\quad\quad=\int_{-\delta_1'}^{\delta_1}\int_{-\delta_2}^{f_*(\tilde{\xi}_1)}
e^{\frac{q}{h}(a\tilde{\xi}_1+b\tilde{\xi}_2)}d\tilde{\boldsymbol{\xi}}
+\int_{-\delta_1'}^{\delta_1}\int_{-\delta_2'}^{-\delta_2}
e^{\frac{q}{h}(a\tilde{\xi}_1+b\tilde{\xi}_2)}d\tilde{\boldsymbol{\xi}}.
\end{split}
\]

Since $\widetilde{U}\subset\{a\tilde{\xi_1}+b\tilde{\xi_2}\leq 0\}$,
\[
\delta_2\leq f_*(\tilde{\xi_1}) \leq -\frac{a}{b}\tilde{\xi_1}
\qquad\mbox{in}\quad \widetilde{U}.
\]
Therefore, we can compute directly and obtain that
\begin{align}\label{Bq}
\begin{aligned}
&\int_{-\delta_1'}^{\delta_1}\int_{-\delta_2}^{f_*(\tilde{\xi}_1)}
e^{\frac{q}{h}(a\tilde{\xi}_1+b\tilde{\xi}_2)}d\tilde{\boldsymbol{\xi}}\\
&\leq\int_{\Xi\circ\mathcal{T}(B_{r_*}(\mathbf{y}_*)\cap D\cap\Gamma_{d+\ve})}
e^{\frac{q}{h}\xi_2}d\boldsymbol{\xi}\\
&\quad\quad=\int_{-\delta_1'}^{\delta_1}\int_{-\delta_2}^{f_*(\tilde{\xi}_1)}
e^{\frac{q}{h}(a\tilde{\xi}_1+b\tilde{\xi}_2)}d\tilde{\boldsymbol{\xi}}
+(\delta_1'+\delta_1)(\delta_2'-\delta_2).
\end{aligned}
\end{align}

In order to make the computation clear, we set
\begin{align}\label{bq-def}
B_{q,*,h}=\int_{-\delta_1'}^{\delta_1}\int_{-\delta_2}^{f_*(\tilde{\xi}_1)}
e^{\frac{q}{h}(a\tilde{\xi}_1+b\tilde{\xi}_2)}d\tilde{\boldsymbol{\xi}}.
\end{align}
By combining (\ref{w-est}), (\ref{Aqh}) and (\ref{Bq}), we have
\begin{align}\label{hard1}
\begin{split}
&\frac{\int_D \frac{\mu_0\mu_D}{\mu}\left|\epsilon(\bfp_{d,h})
-\frac{1}{2}(\nabla\cdot\bfp_{d,h})I_2\right|^2dx}
{\|\bfw_{d,h}\|^2_{L^2(\Omega)}}\\
&\quad \geq C \frac{e^{\frac{2}{h}(s_*-\frac{1}{d})}B_{2,*,h}(c-Ch^2)}
{e^{\frac{2}{h}(s_*-\frac{1}{d})}(B_{q,*,h}+C)^{2/q}
+e^{\frac{2}{h}(\frac{1}{d+\ve}-\frac{1}{d})}
+e^{\frac{2}{h}(s_*-\frac{1}{d}-\delta')}}\\
&\quad =C \frac{B_{2,*,h}(c-Ch^2)}
{(B_{q,*,h})^{2/q}+C
+e^{\frac{2}{h}(\frac{1}{d+\ve}-s_*)}
+e^{\frac{2}{h}(-\delta')}}.
\end{split}
\end{align}

Now we compute $B_{q,*,h}$ more carefully. We note that since $f_*$ is continuous near $\tilde{\boldsymbol{\xi}}=0$, for all $0<\delta< \min(\delta_2,s_*\!-\!\frac{1}{d+\ve},\delta')$, there exists $0<\delta_1''<\min(\delta_1,\delta_1')$ such that
\[
|f_*(\tilde{\xi_1})|=|f_*(\tilde{\xi_1})-f_*(0)|<\delta,
\quad\forall \tilde{\xi_1}\in (-\delta_1'',\delta_1'').
\]
Therefore, 
\[
\begin{split}
B_{q,*,h}&\geq\int_{-\delta_1''}^{\delta_1''}e^{\frac{q}{h}a\tilde{\xi_1}}
\left(\int_{-\delta_2}^{f_*(\tilde{\xi_1})}
e^{\frac{q}{h}b\tilde{\xi_2}}d\tilde{\xi_2}\right)d\tilde{\xi_1}\\
&=\int_{-\delta_1''}^{\delta_1''}e^{\frac{q}{h}a\tilde{\xi_1}}\frac{h}{qb}
\left(e^{\frac{q}{h}bf_*(\tilde{\xi_1})}
-e^{-\frac{q}{h}\delta_2}\right)d\tilde{\xi_1}\\
&\geq \int_{-\delta_1''}^{\delta_1''}e^{\frac{q}{h}a\tilde{\xi_1}}\frac{h}{qb}
\left(e^{-\frac{q}{h}\delta}-e^{-\frac{q}{h}\delta_2}\right)d\tilde{\xi_1}\\
&\geq\frac{h}{qb}e^{-\frac{q}{h}b\delta}(1-e^{-\frac{q}{h}(\delta_2-\delta)})
\int_0^{\delta_1''}e^{\frac{q}{h}a\tilde{\xi_1}}d\tilde{\xi_1}. 
\end{split}
\]
Then for $0<h\ll 1$, we obtain the following estimate
\begin{align}\label{Bq-est}
B_{q,*,h}\geq C \frac{h}{qb}e^{-\frac{q}{h}b\delta},
\end{align}
for all $0<\delta<\min(\delta_2,s_*\!-\!\frac{1}{d+\ve},\delta')$ and for some $C$ independent of $h$.
Moreover, we observe that for $0<h\ll 1$
\[
\frac{e^{\frac{2}{h}(\frac{1}{d+\ve}-s_*)}}{B_{2,*,h}}
\leq Ch^{-1} e^{\frac{2}{h}(\frac{1}{d+\ve}-s_*+\delta)}
\]
and
\[
\frac{e^{-\frac{2}{h}\delta'}}{B_{2,*,h}}
\leq Ch^{-1} e^{\frac{1}{h}(-\delta'+\delta)}.
\]
Then (\ref{hard1}) becomes the following estimate
\[
\frac{\int_D \frac{\mu_0\mu_D}{\mu}\left|\epsilon(\bfp_{d,h})
-\frac{1}{2}(\nabla\cdot\bfp_{d,h})I_2\right|^2dx}
{\|\bfw_{d,h}\|^2_{L^2(\Omega)}}
\geq C\frac{B_{2,*,h}}{(B_{q,*,h})^{2/q}},
\]
for $q_0<q\leq 2$ and $0<h\ll 1$.

Actually, we can directly compute and use the H\"{o}lder inequality to obtain that for $q_0<q\leq 2$,
\[
\begin{split}
(B_{q,*,h})^{2/q}&=\left[
\int_{-\delta_1'}^{\delta_1}e^{\frac{q}{h}a\tilde{\xi_1}}\frac{h}{qb}
\left(e^{\frac{q}{h}bf_*(\tilde{\xi_1})}-e^{-\frac{q}{h}b\delta_2}
\right)d\tilde{\xi_1}\right]^{2/q}\\
&=\left(\frac{h}{qb}\right)^{2/q}\left[
\int_{-\delta_1'}^{\delta_1}e^{\frac{q}{h}a\tilde{\xi_1}}
e^{\frac{q}{h}bf_*(\tilde{\xi_1})}
\left(1-e^{-\frac{q}{h}b(\delta_2+f_*(\tilde{\xi_1}))}\right)
d\tilde{\xi_1}\right]^{2/q}\\
&\leq C\left(\frac{h}{qb}\right)^{2/q}
\int_{-\delta_1'}^{\delta_1}e^{\frac{2}{h}a\tilde{\xi_1}}
e^{\frac{2}{h}bf_*(\tilde{\xi_1})}
\left(1-e^{-\frac{q}{h}b(\delta_2+f_*(\tilde{\xi_1}))}\right)^{2/q}
d\tilde{\xi_1}.
\end{split}
\]
Since $\delta_2+f_*(\tilde{\xi_1})\geq 0$ for $\tilde{\xi_1}\in [-\delta_1',\delta_1]$, we have
\[
0<e^{-\frac{q}{h}(\delta_2+f_*(\tilde{\xi_1}))}\leq 1
\]
and therefore for $q\leq 2$
\[
\left(1-e^{-\frac{q}{h}(\delta_2+f_*(\tilde{\xi_1}))}\right)^{2/q}
\leq 1-e^{-\frac{2}{h}(\delta_2+f_*(\tilde{\xi_1}))}. 
\]
Hence we obtain for $q_0<q\leq 2$
\[
\begin{split}
(B_{q,*,h})^{2/q}&\leq C\left(\frac{h}{qb}\right)^{2/q}
\int_{-\delta_1'}^{\delta_1} e^{\frac{2}{h}a\tilde{\xi_1}} 
e^{\frac{2}{h}bf_*(\tilde{\xi_1})}
\left(1-e^{-\frac{2}{h}(\delta_2+f_*(\tilde{\xi_1}))}\right)
d\tilde{\xi_1}\\
&=C\left(\frac{h}{qb}\right)^{2/q}\left(\frac{2b}{h}\right)
\int_{-\delta_1'}^{\delta_1}\int_{-\delta_2}^{f_*(\tilde{\xi_1})}
e^{\frac{2}{h}(a\tilde{\xi_1}+b\tilde{\xi_2})}d\boldsymbol{\tilde{\xi}}\\
&=C\left(\frac{h}{qb}\right)^{2/q}\left(\frac{2b}{h}\right)B_{2,*,h},
\end{split}
\]
and then the most difficult part of the proof of Theorem \ref{thm4.1} can be concluded that for $0<h\ll 1$ and $q_0<q<2$
\begin{align}\label{hard2}
\frac{\int_D \frac{\mu_0\mu_D}{\mu}\left|\epsilon(\bfp_{d,h})
-\frac{1}{2}(\nabla\cdot\bfp_{d,h})I_2\right|^2dx}
{\|\bfw_{d,h}\|^2_{L^2(\Omega)}}
\geq C h^{1-\frac{2}{q}},
\end{align}
for some constant $C$ independent of $h$.

Back to (\ref{Elower}), by (\ref{hard2}) we have for $0<h\ll 1$ and $q_0<q<2$
\begin{align}\label{Elower1}
\begin{split}
E(\bff_{d,h})&\geq C \left(\int_D\frac{\mu_0\mu_D}{\mu}\left|\epsilon(\bfp_{d,h})
-\frac{1}{2}(\nabla\cdot\bfp_{d,h})I_2\right|^2dx\right)\\
&\quad\times\Bigg{(}1-\frac{e^{\frac{2}{h}(\frac{1}{d+\ve}-\frac{1}{d})}}
{\int_D\frac{\mu_0\mu_D}{\mu}\left|\epsilon(\bfp_{d,h})
-\frac{1}{2}(\nabla\cdot\bfp_{d,h})I_2\right|^2dx}
-h^{(\frac{2}{q}-1)}\Bigg{)}.
\end{split}
\end{align}

By direct computation we have
\[
\frac{e^{\frac{2}{h}(\frac{1}{d+\ve}-\frac{1}{d})}}
{\int_D\frac{\mu_0\mu_D}{\mu}\left|\epsilon(\bfp_{d,h})
-\frac{1}{2}(\nabla\cdot\bfp_{d,h})I_2\right|^2dx}
\leq Ce^{\frac{2}{h}(\frac{1}{d+\ve}-s_*)},
\]
therefore 
\begin{align}\label{weh}
\frac{e^{\frac{2}{h}(\frac{1}{d+\ve}-\frac{1}{d})}}
{\int_D\frac{\mu_0\mu_D}{\mu}\left|\epsilon(\bfp_{d,h})
-\frac{1}{2}(\nabla\cdot\bfp_{d,h})I_2\right|^2dx}=o(1).
\end{align}

Hence by using Lemma \ref{est-data} and by computing directly from (\ref{Elower1}) and (\ref{weh}), we have for $0<h\ll 1$ 
\begin{align}\label{conB}
|E(\bff_{d,h})| 
\geq C h^{-4}A_{2,*,h}\geq Ch^{-4}e^{\frac{2}{h}(s_*-\frac{1}{d})}B_{2,*,h}.
\end{align}
Therefore by (\ref{Bq-est}), 
for all $0<\delta <\min(\delta_2,s_*\!-\!\frac{1}{d+\ve},\delta')$ and for $0<h\ll 1$ we have
\begin{align}\label{efdhla}
|E(\bff_{d,h})|\geq Ch^{-3}e^{\frac{2}{h}(s_*-\frac{1}{d}-\delta)},
\end{align}
for some constant $C$ independent of $h$.
Choose $\delta$ such that $\delta < s_*\!-\!\frac{1}{d}$, then the proof of (B) is complete.

For case (ii) of (\ref{JC}), instead of using (\ref{pf.L}), we shall consider the negative of (\ref{pf.U}):
\[
\begin{aligned}
-E(\bff_{d,h})&\geq \int_D -(\lambda_D+\mu_D)|\nabla\cdot\bfu_{0,d,h}|^2dx\\
&\quad-2\int_D\mu_D
\left|\epsilon(\bfu_{0,d,h})-\frac{1}{2}(\nabla\cdot\bfu_{0,d,h})I_2\right|^2dx
-\int_{\Omega}k^2|\bfw_{d,h}|^2dx.
\end{aligned}
\]
And a similar argument will also give (\ref{efdhla}).

\item [(B$'$)]
As in (B), we will only prove case (i) of (\ref{JC}) by (\ref{pf.L}), and case (ii) of (\ref{JC}) can be treated similarly by using the negative of (\ref{pf.U}).
Suppose that $\bar{D}\cap\Gamma_d\neq\emptyset$ and $D$ has $C^{0,\alpha}$ boundary. 
Since $\bar{D}\cap\Gamma_d\neq\emptyset$, $s_*\geq \frac{1}{d}>\frac{1}{d+\ve}$ and $K=\partial D\cap\ell_{1/s_*}\neq\emptyset$.

In fact, we have proved in (B) that $s_*\!-\!\frac{1}{d+\ve}>0$ and continuity of $\partial D$ ensure (\ref{hard2}) holds.
So (\ref{hard2}) also holds under this assumption of (B$'$) and therefore (\ref{conB}) also holds.

However, since $D$ has $C^{0,\alpha}$ boundary, we have the better estimate than (\ref{Bq-est}).
Without loss of generality, we assume $f_*(\tilde{\xi_1})$ is $C^{0,\alpha}$ for $\tilde{\xi_1}\in [-\delta_1,\delta_1']$. 
Then there exists a positive constant $L$ such that for $\tilde{\xi_1}\in [-\delta-1,\delta_1']$
\[
|f_*(\tilde{\xi_1})|=|f_*(\tilde{\xi_1})-f_*(0)|\leq L|\tilde{\xi_1}|^{\alpha}.
\] 

Therefore we can compute directly as follows: 
\[
\begin{split}
B_{2,*,h}&=\int_{-\delta_1'}^{\delta_1}\int_{-\delta_2}^{f_*(\tilde{\xi}_1)}
e^{\frac{2}{h}(a\tilde{\xi}_1+b\tilde{\xi}_2)}d\tilde{\boldsymbol{\xi}}\\
&\geq\int_{-\delta_1'}^{0}e^{\frac{2a}{h}\tilde{\xi}_1}
\left(\int_{-\delta_2}^{f_*(\tilde{\xi}_1)}e^{\frac{2b}{h}\tilde{\xi}_2}d\tilde{\xi}_2\right)d\tilde{\xi}_1\\
&=\int_{-\delta_1'}^{0}e^{\frac{2a}{h}\tilde{\xi}_1}\frac{h}{2b}
\left(e^{\frac{2b}{h}f_*(\tilde{\xi}_1)}-e^{\frac{-2b\delta_2}{h}}\right)d\tilde{\xi}_1\\
&\geq\frac{h}{2b}
\left(\int_{-\delta_1'}^{0}
e^{\frac{2}{h}(a\tilde{\xi}_1-bL|\tilde{\xi_1}|^{\alpha})}
-e^{\frac{2a}{h}\tilde{\xi}_1}e^{\frac{-2b\delta_2}{h}}d\tilde{\xi}_1\right)\\
&=\frac{h}{2b}
\left(\int_{0}^{\delta_1'}
e^{-\frac{2}{h}(a\tilde{\xi}_1+bL\tilde{\xi_1}^{\alpha})}
-e^{-\frac{2a}{h}\tilde{\xi}_1}e^{\frac{-2b\delta_2}{h}}d\tilde{\xi}_1\right).
\end{split}
\]
Without loss of generality, we assume that $0<\delta_1<1$. 
Since $0<\alpha\leq 1$, we have
\[
\begin{split}
\int_0^{\delta_1'}e^{\frac{-2}{h}
(a\tilde{\xi_1}+bL\tilde{\xi_1}^{\alpha})}d\tilde{\xi_1}
&\geq \int_0^{\delta_1'}e^{\frac{-2}{h}
(a+bL)\tilde{\xi_1}^{\alpha}}d\tilde{\xi_1}\\
&=h^{\frac{1}{\alpha}}\int_0^{\frac{\delta_1'}{h^{1/\alpha}}}
e^{-2(a+bL)\tilde{\xi_1}^{\alpha}}d\tilde{\xi_1}.
\end{split}
\]
Then by computing directly, we have
\[
B_{2,*,h}\geq \frac{h}{2b}
\left\{h^{\frac{1}{\alpha}}
\int_0^{\frac{\delta_1'}{h^{1/\alpha}}}e^{-2(a+bL)\tilde{\xi_1}^{\alpha}}d\tilde{\xi}_1
-he^{\frac{-2\delta_2}{h}}\int_0^{\frac{\delta_1'}{h}}
e^{-2a\tilde{\xi}_1}d\tilde{\xi}_1\right\}.
\]
Since
\[
\int_0^{\frac{\delta_1'}{h^{1/\alpha}}}e^{-2(a+bL)\tilde{\xi_1}^{\alpha}}d\tilde{\xi}_1
\rightarrow\int_0^{\infty}e^{-2(a+bL)\tilde{\xi}_1^{\alpha}}d\tilde{\xi}_1<\infty
\quad\mbox{as}\quad h\rightarrow 0^+
\]
and
\[
\int_0^{\frac{\delta_1}{h}}e^{-2a\tilde{\xi}_1}d\tilde{\xi}_1
\rightarrow\int_0^{\infty}e^{-2a\tilde{\xi}_1}d\tilde{\xi}_1<\infty
\quad\mbox{as}\quad h\rightarrow 0^+,
\]
we have
\[
B_{2,*,h}\geq C h^{1+\frac{1}{\alpha}} \quad\mbox{for}\quad 0<h\ll 1.
\]
Then by (\ref{conB})
\[
E(\bff_{d,h})\geq C e^{\frac{2}{h}(s_*-\frac{1}{d})}
h^{-4}h^{1+\frac{1}{\alpha}}
=Ce^{\frac{2}{h}(s_*-\frac{1}{d})}h^{-3+\frac{1}{\alpha}}
\]
for each $0<h\ll 1$.
If $\alpha>\frac{1}{3}$, then even when $s_*\!=\!\frac{1}{d}$, 
$|E(\bff_{d,h})|$ tends to infinity as $h$ tends to zero.
\end{itemize}
\end{proof}

\section*{Acknowledgements}
I would like to deeply thank Professor Jenn-Nan Wang for the suggestions of this problem and for many helpful discussions.
I would also like to thank Professor Gen Nakamura for informing me about the work of Sini and Yoshida. 
Thank Liren Lin for many discussions on my English writing.
This work is partly supported by the National Science Council
of the Republic of China.

\bibliographystyle{plain}
\bibliography{ref}

\end{document}